\begin{document}

\title{
\bf Stationary solutions of a free boundary problem modeling the growth of vascular tumors with a necrotic core}

\author{Huijuan Song$^a$, Bei Hu$^{b}$ and Zejia Wang$^{a}$
\\
{\small{\it $^a$College of Mathematical and Informational Science, Jiangxi Normal University,
}}
\\
{\small{\it Nanchang 330022, China }}
\\
{\small{\it $^b$ Department of Applied Computational Mathematics and Statistics, University of Notre Dame,}}
\\
{\small{\it Notre Dame, IN 46556, USA}}
}

\date{}

\maketitle

\begin{abstract}
In this paper, we present a rigorous mathematical analysis of a free boundary problem modeling the growth of a vascular solid tumor with a necrotic core. If the vascular system supplies the nutrient concentration $\sigma$ to the tumor at a rate $\beta$, then $\frac{\partial\sigma}{\partial\bf n}+\beta(\sigma-\bar\sigma)=0$ holds on the tumor boundary, where $\bf n$ is the unit outward normal to the boundary and $\bar\sigma$ is the nutrient concentration outside the tumor. The living cells in the nonnecrotic region proliferate at a rate $\mu$.
We show that for any given $\rho>0$, there exists a unique $R\in(\rho,\infty)$ such that the corresponding radially symmetric solution solves the  steady-state necrotic tumor system with necrotic core boundary $r=\rho$ and outer boundary
$r=R$; moreover, there exist a positive integer $n^{**}$ and a sequence of $\mu_n$, symmetry-breaking stationary solutions bifurcate from the radially symmetric stationary solution for
each $\mu_n$ (even $n\ge n^{**})$.
\\
{\bf Keywords:}
\quad Stationary solution; Free boundary problem; Vascular tumor; Necrotic core; Bifurcation.
\\
{2000 MR Subject Classification:} 35R35, 35K57, 35B35
\end{abstract}

\let\oldsection\section
\def\SEC{\oldsection}
\renewcommand\section{\setcounter{equation}{0}\SEC}
\renewcommand\thesection{\arabic{section}}

\renewcommand\theequation{\thesection.\arabic{equation}}
\newtheorem{definition}{Definition}[section]
\newtheorem{lemma}{Lemma}[section]
\newtheorem{theorem}{Theorem}[section]
\newtheorem{remark}{Remark}[section]
\def\pd#1#2{\dfrac{\partial#1}{\partial#2}}
\allowdisplaybreaks
\renewcommand{\proofname}{\indent\it\bfseries Proof}
\newcommand{\ep}{\varepsilon}
\newcommand{\eps}[1]{{#1}_{\varepsilon}}

\section{Introduction}

The process of tumor growth in vivo is a complicated phenomenon involving many inter-related processes,
which can be divided into two phases: avascular and vascular growth. In both phases, when a tumor has grown to a detectable size, the inner far-from-surface region of the tumor may comprise only dead cells due to the nonuniform distribution of nutrient materials, which is called necrotic core; see \cite{B-F(05),Cui(06),F-MB(05)} and the references cited therein.
In this paper, we
are interested in a model for the growth of a vascular solid tumor, which consists of a necrotic core of dead cells, and a surrounding shell which contains life-proliferating cells.
Assume that there is no consumption of the nutrient as well as no vasculature in the necrotic core; on the other hand, as a result of angiogenesis, the proliferating rim possesses
its own vasculature, and then the nutrient may be supplied to the nonnecrotic shell via the capillary network. Thus, as in \cite{B-Ch(96),C-F(01)}, the nutrient concentration $\sigma$
satisfies the following reaction-diffusion equation
\begin{equation}
\label{eq(0.1)}
c\frac{\partial \sigma}{\partial t}=\Delta \sigma+[\Gamma(\sigma_B-\sigma)-\lambda_0\sigma]I_{\Omega(t)\setminus D(t)}\quad\mathrm {in} \ \Omega(t),
\end{equation}
where $\Omega(t)\subset{\mathbb R}^3$ is the tumor domain at time $t$ with a moving boundary $\partial\Omega(t)$, $D(t)\subset\Omega(t)$
is the necrotic core region, $\Gamma$ is the transfer rate of nutrient-in-blood-tissue,
$\sigma_B$ is the concentration of nutrient in the vasculature, and so the term $\Gamma(\sigma_B-\sigma)$ accounts for the transfer of nutrient by
means of the vasculature stemming from angiogenesis in the nonnecrotic region. The term $\lambda_0\sigma$ describes the nutrient consumption
by proliferating cells at the rate of $\lambda_0$, and $c=T_{\rm diffusion}/T_{\rm growth}$ represents the ratio of the nutrient diffusion time scale to the tumor
growth (i.e., tumor doubling) time scale. Typically, $T_{\rm diffusion}\thickapprox1$ minute while $T_{\rm growth}\thickapprox1$ day \cite{Byr-Cha,C-F(01)}. Thus, $c\ll1$. Throughout this paper, the notation $I_E$ will always be used to denote the indicator function of a subset $E$ of ${\mathbb R}^3$.

Using appropriate change of variables \cite{C-F(01),Fri-Lam}, we can rewrite
the equation \eqref{eq(0.1)} in the form
\begin{equation}
c\frac{\partial \sigma}{\partial t}=\Delta\sigma-\sigma I_{\Omega(t)\setminus D(t)} \quad \mathrm {in} \ \Omega(t).
\label{eq(0.2)}
\end{equation}
As mentioned above, it is natural to assume that
\begin{equation}\label{eq(0.3)}
\sigma=\underline\sigma\quad \mathrm {in} \ D(t),
\end{equation}
where $\underline\sigma>0$ plays the role of a threshold value in
the sense that in the region where $\sigma>\underline\sigma$ nutrient is enough to sustain (at least a portion of)
 tumor cells alive and proliferating, whereas in the region where $\sigma\le\underline\sigma$, nutrient is insufficient to sustain any tumor cell alive \cite{X-B(15)}. In addition,
since the nutrient enters tumor by the vascular system, $\sigma$ satisfies the boundary condition:
\begin{equation}\label{eq(0.4)}
\frac{\partial\sigma}{\partial\bf n} +\beta(t)(\sigma-\bar\sigma)=0 \quad {\mathrm {on}}\ \partial\Omega(t),
\end{equation}
where $\bf n$ is the outward  normal, $\bar\sigma$ is
the nutrient concentration outside the tumor,  $\beta(t)$ is the rate of nutrient supply to the tumor, which may vary in time.
Angiogenesis results in an increase in $\beta(t)$; conversely, if the tumor is treated with anti-angiogenic drugs, $\beta(t)$ will decrease and the starved
tumor will shrink.

The pressure $p$ stems from the transport of cells which proliferate or die. Let $\bf V$ be the velocity of tumor cells. If we assume that the
density of tumor cells is constant, there is no proliferation in the necrotic core, and the proliferation rate within the nonnecrotic region
 is linearly dependent on the nutrient, then the conservation of mass gives that
$$
{\rm div}{\bf V}= \mu(\sigma-\tilde\sigma)I_{\Omega(t)\setminus D(t)}-\nu I_{D(t)},
$$
where $\mu$ is a positive parameter expressing the ``intensity" of the expansion by mitosis,
the term $\mu(\sigma-\tilde\sigma)$ means that
the cell birthrate is $\mu\sigma$, while the cell death rate (apoptosis) is given by $\mu\tilde\sigma$,
and $\nu$ is the dissolution rate of necrotic cells.
Combining with Darcy's law ${\bf V}=-\nabla p$, we obtain
\begin{equation}\label{eq(0.5)}
-\Delta p=\mu(\sigma-\tilde\sigma) I_{\Omega(t)\setminus D(t)}-\nu I_{D(t)}\quad\mathrm {in} \ \Omega(t).
\end{equation}

Due to cell-to-cell adhesiveness, there holds
\begin{equation}\label{eq(0.6)}
p=\kappa \quad \mathrm {on} \ \partial\Omega(t),
\end{equation}
where $\kappa$ is the mean curvature,
and the continuity of the velocity field up to the boundary of the tumor leads to
\begin{equation}\label{eq(0.7)}
V_{\bf n}={\bf V}\cdot{\bf n}=-\frac{\partial p}{\partial {\bf n}}\quad  \mathrm {on} \ \partial\Omega(t),
\end{equation}
where $V_{\bf n}$ is the velocity of the free boundary in the direction $\bf n$.

The first reaction-diffusion mathematical model of tumor growth in the form of a free boundary problem of a system of partial differential equations was proposed in 1972 by Greenspan \cite{Gr-72,Gr-76}, which was remarkably improved by Byrne and Chaplain \cite{Byr-Cha,B-Ch(96)} during 1990's, where apoptosis is incorporated. It was Friedman and Reitich \cite{Fri-Rei} who made the rigorous mathematical analysis of such free boundary problems in 1999. To date, mathematical modeling, numerical simulation and theoretical analysis on tumor models have been carried out in a large number of papers; see \cite{B-F(05),B-E-Z,Nie,Cui(05),Cui(06),Cui-Es(07),Cui-Es(08),Cui,Cui(19),C-F1,C-F(01),C-Z(18),EM,Fon,F-MB(05),Fri4,F-H1,Fri5,Fri-Hu(07),
Fri-Hu(07-2),Fri6,
Fri-Lam,Hao,HZH(17),HZH(19),SW,SW(18),Wan,WXS,Wei(06),W2,Wu(19),Wu(18),WC2007,WC2015,X-B(15),
Zh-C,Zh-E-C,Zh-W,Z-C(18),Z-C(18-2)}, the review articles \cite{Ara,Fri1,Fri3,Low,Na(05)} and the references therein.

If $D(t)=\emptyset$ and $\Omega(t)=B_{R(t)}(0)$, then the model \eqref{eq(0.2)}-\eqref{eq(0.7)} describes the growth of a nonnecrotic spherical tumor with angiogenesis, proposed by Friedman and Lam \cite{Fri-Lam} as a revision to the Byrne-Chaplain inhibitor-free tumor model \cite{Byr-Cha} in which instead of \eqref{eq(0.4)},
\begin{equation}
\label{eq(0.8)}
 \sigma=\bar\sigma
\end{equation}
is imposed on the tumor boundary. Biologically, the boundary condition \eqref{eq(0.4)} is more reasonable compared with \eqref{eq(0.8)}. In fact, as explained in \cite{Z-C(18)}, tumor surface acts as a barrier to nutrient diffusion, and $1/\beta(t)$ reflects the reduction rate of nutrient by the tumor surface; particularly, $1/\beta(t)=0$ means that tumor surface is obstacle-free to nutrient diffusion which is the case \eqref{eq(0.8)}, whereas $1/\beta(t)=\infty$ means that tumor surface is a complete barrier to nutrient diffusion.

In the nonnecrotic case, assuming $\beta(t)\equiv\beta$ and $0<\tilde\sigma<\bar\sigma$, Friedman and Lam \cite{Fri-Lam} showed that the system \eqref{eq(0.2)}-\eqref{eq(0.7)} allows a unique radially symmetric stationary solution $(\sigma_s(r),p_s(r),R_s)$;
later, Huang, Zhang and Hu \cite{HZH(17)} proved that a branch of symmetry-breaking stationary solutions bifurcates from the radially symmetric stationary solution for each $\mu_n(R_s)$ (even $n\ge 2$) with free boundary
\begin{equation*}
r=R_s+\ep Y_{n,0}(\theta,\varphi) +o(\ep),
\end{equation*}
where $Y_{n,0}$ is the spherical harmonic of order $(n,0)$; very recently, they \cite{HZH(19)} made further efforts and found a threshold value $\mu^*=\mu^*(R_s)$ such that the radially symmetric stationary solution is linearly stable for $\mu<\mu^*$ and linearly unstable for $\mu>\mu^*$ with respect to nonradial perturbations. For the case where the nutrient consumption rate and the proliferation rate of tumor cells are both general functions, Zhuang and Cui established the existence and uniqueness of radially symmetric stationary solutions \cite{Z-C(18)}, and proved that there exists a branch of bifurcation solutions bifurcating from the radially symmetric stationary solution for $\gamma_k$ (any $k\ge k_*$ in the 2 dimension case and even $k\ge k_*$ in the 3 dimension case) by taking $\gamma$ (surface tension coefficient $\gamma$ is defined when \eqref{eq(0.6)} is replaced by $p=\gamma\kappa$) as the bifurcation parameter \cite{C-Z(18)}. The asymptotic stability of radially symmetric stationary solutions was also analyzed in \cite{Z-C(18),Z-C(18-2)}. For cases in the presence of inhibitor, Wang et al. \cite{WXS} obtained the existence of symmetric-breaking stationary solutions for $\mu_n$ (even $n>n^{**}$); also see a very recent paper \cite{SW} for the discussion on the existence of radially symmetric stationary solutions and the asymptotic behavior of radially symmetric transient solutions.
Assuming $\beta(t)=\infty$, i.e., \eqref{eq(0.8)} holds, tumor models have been intensively studied; we refer the reader to \cite{Cui-Es(07),Cui-Es(08),Cui,C-F1,Fon,Fri4,F-H1,Fri5,Fri-Hu(07),Fri-Hu(07-2),Fri6,WC2007,Wan,WC2015,Zh-C,Zh-E-C} and the reference therein.

For the necrotic case, most of studies were on tumor models where the Dirichlet boundary condition \eqref{eq(0.8)} is imposed; see \cite{B-F(05),B-E-Z,B-Ch(96), C-F(01), Cui(05),Cui(06),Cui(19),F-MB(05),Hao,Wei(06),Wu(19),Wu(18)}. Cui \cite{Cui(06)} proved the existence and uniqueness of radially symmetric stationary solutions under a crucial assumption $\underline\sigma<\tilde\sigma<\underline\sigma+\nu/\mu$, i.e., $\nu>\mu(\tilde\sigma-\underline\sigma)>0$,
improving earlier results in \cite{C-F(01)}. Under the assumption $\nu$=0, Hao et al. \cite{Hao} derived the first bifurcation result for the tumor model with a necrotic core, in which they studied the  two-dimensional case by taking $\mu$ as a bifurcation parameter with the aid of numerical calculations. Very recently, Wu \cite{Wu(19)} rigorously analyzed the necrotic multilayered tumor model, and obtained the existence of bifurcation branches of non-flat stationary solutions for $\gamma_k$ ($k\ge K$).
Asymptotic stability of stationary solutions to the above two types of necrotic tumor models was also studied, cf. \cite{Cui(19),Wu(18)}. For necrotic tumor models with \eqref{eq(0.4)},
Shen et al. \cite{SW(18)} established the existence and uniqueness of radially symmetric stationary solutions to the tumor spheroid model with $\nu>0$ under certain conditions on the parameters.

It is well known that the main feature of the free boundary problems modeling the growth of necrotic tumors is that they include two free boundaries, one for the
outer tumor boundary, whose evolution is governed by an evolution equation (such as \eqref{eq(0.7)}), the other for the inner necrotic boundary, an obstacle-type free surface, whose evolution is implicit. Due to the presence of two free
boundaries, the mathematical analysis turns out to be far more challenging.

Motivated by \cite{Hao,Wu(19)}, we shall perform rigorous mathematical analysis of the stationary state of the problem \eqref{eq(0.2)}-\eqref{eq(0.7)} in the
three-dimensional case, under the assumption that
\begin{equation*}
\label{eq(0.9)}
0<\underline\sigma<\tilde\sigma<\bar\sigma=1,\quad\beta(t)\equiv\beta,\quad\nu=0,
\end{equation*}
where $\beta$ is a positive constant. That is,
\begin{align}
&\Delta\sigma=\sigma I_{\Omega\setminus D}\quad{\rm in}~\Omega,\label{eq(1.1)}
\\
-&\Delta p=\mu(\sigma-\tilde{\sigma})I_{\Omega\setminus D}\quad{\rm in}~\Omega,\label{eq(1.2)}
\\
&\sigma=\underline\sigma\quad{\rm in}~D,\label{eq(1.3)}
\\
&[\partial_{\bf n}\sigma]=0\quad{\rm on}~\partial D,\label{eq(1.3.1)}
\\
&[p]=0,\quad[\partial_{\bf n}p]=0\quad{\rm on}~\partial D,\label{eq(1.3.2)}
\\
&\partial_{\bf n}\sigma+\beta(\sigma-1)=0\quad{\rm on}~\partial\Omega,\label{eq(1.4)}
\\
&p=\kappa\quad {\rm on}~\partial\Omega,\label{eq(1.5)}
\\
&\partial_{\bf n}p=0\quad{\rm on}~\partial\Omega.\label{eq(1.6)}
\end{align}
Here and below, the notation $[p]\big|_{\partial D}$ denotes the jump of $p$ as it crosses $\partial D$, i.e.,
$$
[p]=p^+\big|_{\partial D}-p^-\big|_{\partial D}\quad{\rm for}\quad
p^+=p\big|_{\Omega\setminus D}\quad{\rm and}\quad
p^-=p\big|_{D}.
$$
Similarly,
$[\partial_{\bf n}\sigma]\big|_{\partial D}$ and $[\partial_{\bf n}p]\big|_{\partial D}$ denote the jump of the normal derivatives of $\sigma$ and $p$ across $\partial D$ respectively. It is not difficult to find out, the inner boundary conditions \eqref{eq(1.3.1)}, \eqref{eq(1.3.2)} are implied by the equations \eqref{eq(1.1)}, \eqref{eq(1.2)}, and by the maximum principle, $\sigma>\underline\sigma$ in $\Omega\setminus\overline{D}$.
For the problem \eqref{eq(1.1)}-\eqref{eq(1.6)}, we shall first adopt a similar idea as that in \cite{Hao} to study radially symmetric solutions, then
by choosing $\mu$ as a bifurcation parameter, show that there exist a positive integer $n^{**}$ and a sequence of $\mu_n$ such that for
each $\mu_n$ (even $n\ge n^{**})$, a branch of symmetry-breaking stationary solutions bifurcates from the radially symmetric solution (see Theorem \ref{thm-2} stated below).

We stress that in carrying out the bifurcation analysis based on the Crandall-Rabinowitz theorem (see Theorem \ref{thm-3}), there are three major difficulties to be overcome. Firstly, since the problem \eqref{eq(1.1)}-\eqref{eq(1.6)} involves two free boundaries, there is a need for an appropriate Hanzawa-type transformation. Secondly, noting that the expansions \eqref{sigma}, \eqref{p} are needed for the computation of Fr\'echet derivatives, in order to rigorously prove the expansions \eqref{sigma}, \eqref{p} of $\sigma$, $p$ with respect to $\ep$, we have to analyze the dependence of the inner boundary on the outer boundary and the nutrient concentration (see \eqref{T}, \eqref{T-S}), and
establish Schauder estimates in each region (necrotic and nonnecrotic)  for diffraction problems by using the Schauder estimates for an elliptic equation and that near the boundary for elliptic systems \cite{ADN}. Thirdly, it is necessary to verify $B_0<0$ (see Lemma \ref{lem-5.2}). However, as we shall see later, the expression of $B_0$ is complex. Inspired by \cite{HZH(17)}, we will write $B_0$ in order of ascending powers of $\beta$. Then, thanks to the existence of explicit forms for the modified Bessel functions of order of half an odd integer, we can deduce that the coefficient (the function of $\rho$, $R$) of each term is negative by a lengthy calculation.

The rest of this paper is arranged as follows. In Section 2, we present some preliminary material which will be needed in the next sections. In Section 3, we study the radial symmetric solutions to the problem \eqref{eq(1.1)}-\eqref{eq(1.6)}. In Section 4, we analyze the linearization of \eqref{eq(1.1)}-\eqref{eq(1.6)} about the radial solution by introducing an appropriate Hanzawa-type transformation. In the last section, we prove the existence of bifurcation solutions.

\section{Preliminaries}

In this section,  we first collect some properties of the spherical harmonics and the modified spherical Bessel functions, then present an auxiliary lemma and end with the Crandall-Rabinowitz theorem.

In $\mathbb{R}^3$, the family of the spherical harmonic functions $\{Y_{n,m}\}$ forms a complete orthonormal basis for $L^2(\Sigma)$,
where $\Sigma$ is the unit sphere, and
\begin{equation}
\label{Y}
\Delta_\omega Y_{n,m}=-n(n+1)Y_{n,m}.
\end{equation}
Here and below,
$$
\Delta_\omega=\frac1{\sin\theta}\frac{\partial}{\partial\theta}\left(\sin\theta
\frac{\partial}{\partial\theta}\right)
+\frac1{\sin^2\theta}\frac{\partial^2}{\partial\varphi^2}
$$
is the Laplace operator on $\Sigma$. The Laplace operator in $\mathbb{R}^3$ can be written as
$$
\Delta=\frac{\partial^2}{\partial r^2}+\frac2r\frac{\partial}{\partial r}
+\frac1{r^2}\Delta_\omega.
$$

The modified spherical
Bessel functions given by
$$
i_n(s)=\sqrt{\frac{\pi}{2s}}I_{n+1/2}(s),\quad k_n(s)=\sqrt{\frac{\pi}{2s}}K_{n+1/2}(s),\quad s>0,\quad n=0,1,2,\cdots,
$$
form a fundamental solution set of the differential equation
(cf. \citep[(10.47.7)--(10.47.9)]{NIST})
\begin{equation}
\label{B-1}
y''(s)+\frac2s y'(s)-\left[1+\frac{n(n+1)}{s^2}\right]y(s)=0,
\end{equation}
where $I_{n+1/2}(s)$ and $K_{n+1/2}(s)$ are the modified Bessel functions,
$I_\nu(s)>0$, $I'_\nu(s)>0$ for $s>0$ \citep[(10.25.2)]{NIST}, and $K_\nu(s)>0$, $K'_\nu(s)<0$ for $s>0$
\citep[(10.32.9)]{NIST}. Let $g_n(s)$ denote $i_n(s)$ or $(-1)^n k_n(s)$.
Then from \citep[(10.51.4) and (10.51.5)]{NIST},
\begin{align}
&g_{n-1}(s)-g_{n+1}(s)=\frac{2n+1}sg_n(s),\quad n\ge1,\label{B-2}
\\
&g'_n(s)=g_{n-1}(s)-\frac{n+1}sg_n(s),
\quad n\ge1,\label{B-3}
\\
&g'_n(s)=g_{n+1}(s)+\frac{n}sg_n(s),\quad n\ge0.\label{B-4}
\end{align}
It follows from \eqref{B-3}, \eqref{B-4} that
\begin{equation}
\label{B-10}
i'_n(s)>0\quad{\rm and}\quad k'_n(s)<0,\quad n\ge0,~s>0.
\end{equation}
In addition, by \citep[(10.28.2), (10.41.1) and (10.41.2)]{NIST},
\begin{align}
&i_n(s)k_{n+1}(s)+i_{n+1}(s)k_n(s)=\frac{\pi}2\frac1{s^2},\quad n\ge0,
\label{B-9}
\\
&i_n(s)\thicksim\frac1{\sqrt{2(2n+1)s}}\left(\frac{es}{2n+1}
\right)^{n+\frac12}
\quad{\rm as}~n\to\infty,\label{B-7}
\\
&k_n(s)\thicksim\frac{\pi}{\sqrt{2(2n+1)s}}
\left(\frac{es}{2n+1}\right)^{-n-\frac12}
\quad{\rm as}~n\to\infty.\label{B-8}
\end{align}
Explicit expressions for $i_n$, $k_n$, $n=0$, $1$ are as follows \citep[(10.49.9) and (10.49.13)]{NIST}:
\begin{align}
&i_0(s)=\frac{\sinh s}{s},\quad k_0(s)=\frac{\pi}2\frac{e^{-s}}{s},
\label{B-5}
\\
&i_1(s)=-\frac{\sinh s}{s^2}+\frac{\cosh s}{s},
\quad k_1(s)=\frac{\pi}2e^{-s}\left(\frac1s+\frac1{s^2}\right).
\label{B-6}
\end{align}
A direct calculation shows that
\begin{align}
i_0(t)k_0(s)-i_0(s)k_0(t)&=\frac{\pi}2\frac1{ts}\sinh(t-s),\label{i,k-2}
\\
i_1(t)k_1(s)-i_1(s)k_1(t)&=\frac{\pi}2\frac1{t^2s^2}[(t-s)\cosh(t-s)+(ts-1)\sinh(t-s)],
\label{i,k-1}
\\
i_0(t)k_1(s)+i_1(s)k_0(t)&=\frac{\pi}2\frac1{ts^2}[\sinh(t-s)+s
\cosh(t-s)].\label{i,k-3}
\end{align}

In order to establish the dependence of the inner boundary on the tumor outer boundary, we need the following lemma.

\begin{lemma} (\citep[Lemma 8.2]{Fri7})
\label{lem-2.1}
Let $s$ be a nonnegative integer, and let
$$
f(\theta,\varphi)=\sum_{n\ge0,m}f_{n,m}Y_{n,m}(\theta,\varphi).
$$
Then there exist positive constants $c_1$, $c_2$ independent of $f$ such that
$$
c_1\|f\|^2_{H^{s+1/2}(\Sigma)}\le\sum_{n\ge0}(1+n^{2s+1})\sum_m|f_{n,m}|^2\le c_2\|f\|^2_{H^{s+1/2}(\Sigma)}.
$$
\end{lemma}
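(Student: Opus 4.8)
\textbf{Proof proposal for Lemma~\ref{lem-2.1}.}
The plan is to reduce the estimate to an elementary comparison of weights after diagonalizing the Laplace--Beltrami operator. Recall from \eqref{Y} that $\{Y_{n,m}\}$ is an orthonormal basis of $L^2(\Sigma)$ consisting of eigenfunctions of $-\Delta_\omega$ with $-\Delta_\omega Y_{n,m}=n(n+1)Y_{n,m}$. Hence the operator $\Lambda:=(I-\Delta_\omega)^{1/2}$, defined spectrally by $\Lambda Y_{n,m}=(1+n(n+1))^{1/2}Y_{n,m}$, is positive and self-adjoint on $L^2(\Sigma)$, and I would take as the working definition of the Sobolev norm of real order $\sigma\ge0$
$$\|f\|_{H^\sigma(\Sigma)}^2:=\|\Lambda^\sigma f\|_{L^2(\Sigma)}^2 .$$
With this definition Parseval's identity yields immediately, for $f=\sum_{n,m}f_{n,m}Y_{n,m}$,
$$\|f\|_{H^\sigma(\Sigma)}^2=\sum_{n\ge0}\bigl(1+n(n+1)\bigr)^\sigma\sum_m|f_{n,m}|^2 .$$

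It then remains to compare the two families of weights for $\sigma=s+\tfrac12$. First I would check the elementary inequality
$$\tfrac12\bigl(1+n^{2s+1}\bigr)\ \le\ \bigl(1+n(n+1)\bigr)^{s+1/2}\ \le\ 3^{\,s+1/2}\bigl(1+n^{2s+1}\bigr)\qquad\text{for every integer }n\ge0 .$$
Indeed, for $n=0$ all three terms equal $1$ (note $2s+1\ge1$); for $n\ge1$ one has $n^2\le 1+n(n+1)\le 1+2n^2\le 3n^2$, hence $n^{2s+1}=(n^2)^{s+1/2}\le\bigl(1+n(n+1)\bigr)^{s+1/2}\le(3n^2)^{s+1/2}=3^{\,s+1/2}n^{2s+1}$, while $\tfrac12(1+n^{2s+1})\le n^{2s+1}\le 1+n^{2s+1}$. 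Inserting these bounds into the Parseval identity and summing over $m$ and $n$ gives the claimed two-sided estimate with $c_1=\tfrac12$ and $c_2=3^{\,s+1/2}$, both visibly independent of $f$. This proves the lemma once the spectral norm $\|\Lambda^{s+1/2}\cdot\|_{L^2}$ is identified with the intrinsic $H^{s+1/2}(\Sigma)$ norm.

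The hard part, and the only step that is not bookkeeping, will be precisely that identification. For an integer order $k\ge0$ it is a consequence of iterated elliptic regularity for the elliptic operator $I-\Delta_\omega$ on the compact manifold $\Sigma$: this gives the equivalence of $\|\Lambda^k\cdot\|_{L^2}$ with the chart-based $H^k(\Sigma)$ norm built from a finite atlas and a subordinate partition of unity. The half-integer order $s+\tfrac12$ is then recovered by interpolation between $H^s(\Sigma)$ and $H^{s+1}(\Sigma)$: on the spectral side, interpolation commutes with the functional calculus of the positive self-adjoint operator $\Lambda$, so $[H^s(\Sigma),H^{s+1}(\Sigma)]_{1/2}$ carries $\bigl(\sum_{n,m}(1+n(n+1))^{s+1/2}|f_{n,m}|^2\bigr)^{1/2}$ as an equivalent norm; on the intrinsic side, $[H^s(\Sigma),H^{s+1}(\Sigma)]_{1/2}=H^{s+1/2}(\Sigma)$ by the standard interpolation theory of Sobolev spaces on smooth compact manifolds (transferred from $\mathbb{R}^2$ through charts). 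All of this is classical, and it is exactly the content invoked as \citep[Lemma 8.2]{Fri7}; for the purposes of this paper one may simply \emph{adopt} the spectral expression as the definition of $\|f\|_{H^{s+1/2}(\Sigma)}$, after which the lemma amounts to nothing more than the elementary weight comparison above.
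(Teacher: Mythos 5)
Your argument is correct, and it is the standard proof of this fact; the paper itself does not prove Lemma~\ref{lem-2.1} but simply cites it from Friedman--Reitich \citep[Lemma 8.2]{Fri7}, so there is no paper proof to compare against. Your reduction — take the spectral definition $\|f\|_{H^\sigma(\Sigma)}^2=\sum_{n}(1+n(n+1))^\sigma\sum_m|f_{n,m}|^2$, compare the weights $(1+n(n+1))^{s+1/2}$ and $1+n^{2s+1}$ by the elementary $n^2\le 1+n(n+1)\le 3n^2$ for $n\ge1$, and then identify the spectral norm with the intrinsic Sobolev norm by elliptic regularity at integer orders plus interpolation to reach the half-integer order — is exactly the content of the cited result. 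One tiny bookkeeping slip at the end: your displayed inequality
$$
\tfrac12\bigl(1+n^{2s+1}\bigr)\le\bigl(1+n(n+1)\bigr)^{s+1/2}\le 3^{s+1/2}\bigl(1+n^{2s+1}\bigr)
$$
rearranges, once inserted into Parseval, to give $c_1=3^{-(s+1/2)}$ and $c_2=2$ in the lemma as stated (the roles of your two constants get swapped when you move from bounding $(1+n(n+1))^{s+1/2}$ by $1+n^{2s+1}$ to bounding $1+n^{2s+1}$ by $(1+n(n+1))^{s+1/2}$); this does not affect the validity of the argument, only the labelling of the constants.
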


The bifurcation analysis in the present paper is based on an application of the following theorem of Crandall and Rabinowitz.

\begin{theorem}
\label{thm-3}
(\citep[Theorem 1.7]{Cra}) Let $X$, $Y$ be real Banach spaces and $F(x,\mu)$ a $C^p$ map, $p\ge3$, of a neighborhood $(0,\mu_0)$ in
$X\times\mathbb R$ into $Y$. Suppose

(i) $F(0,\mu)=0$ for all $\mu$ in a neighborhood of $\mu_0$;

(ii) ${\rm Ker}[F_x(0,\mu_0)]$ is a one dimensional space, spanned by $x_0$;

(iii) ${\rm Im}[F_x(0,\mu_0)]=Y_1$ has codimension $1$;

(iv) $[F_{\mu x}](0,\mu_0)x_0\not\in Y_1$.
\\
Then $(0,\mu_0)$ is a bifurcation point of the equation $F(x,\mu)=0$ in the following sense: In a neighborhood of $(0,\mu_0)$ the set of
solutions of $F(x,\mu)=0$ consists of two $C^{p-2}$ smooth curves $\Gamma_1$ and $\Gamma_2$ which intersect only at the point
$(0,\mu_0)$; $\Gamma_1$ is the curve
$(0,\mu)$ and $\Gamma_2$ can be parameterized as follows:
$$
 \Gamma_2: (x(\ep),\mu(\ep)),|\ep|~{\rm small},(x(0),\mu(0))=(0,\mu_0),x'(0) = x_0.
$$
\end{theorem}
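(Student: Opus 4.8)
The plan is to use the classical Lyapunov--Schmidt reduction, reducing $F(x,\mu)=0$ to a one‑dimensional equation that can be solved by the implicit function theorem. Write $L:=F_x(0,\mu_0)$. By (ii), $\mathrm{Ker}\,L=\mathrm{span}\{x_0\}$ has a closed complement $X_1$, so $X=\mathbb R x_0\oplus X_1$; by (iii), $\mathrm{Im}\,L=Y_1$ has codimension one, so $Y=Y_1\oplus\mathbb R y_0$ for a fixed $y_0\notin Y_1$. Let $Q\colon Y\to Y_1$ be the bounded projection along $y_0$ and $\ell\in Y^{*}$ a functional with $\mathrm{Ker}\,\ell=Y_1$ and $\langle\ell,y_0\rangle=1$. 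Since $L$ maps onto $Y_1$ with kernel $\mathbb R x_0$, the restriction $L|_{X_1}\colon X_1\to Y_1$ is a Banach space isomorphism. I would seek the nontrivial solutions in the form $x=s\,(x_0+z)$ with $s\in\mathbb R$ small and $z\in X_1$ small.

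Splitting $F(x,\mu)=0$ into its $Q$-part and its $(I-Q)$-part, I would first solve $QF(s(x_0+z),\mu)=0$ for $z$. Since $F(0,\mu)=0$ by (i), Taylor's formula with integral remainder gives $\frac1s F(su,\mu)=\int_0^1 F_x(\tau su,\mu)u\,d\tau$, so the map
\[
G(s,z,\mu):=
\begin{cases}
\dfrac1s\,QF\bigl(s(x_0+z),\mu\bigr),& s\neq0,\\[2mm]
Q\,F_x(0,\mu)(x_0+z),& s=0,
\end{cases}
\]
is well defined and of class $C^{p-1}$ near $(0,0,\mu_0)$, with $G(0,0,\mu_0)=QLx_0=0$ (as $Lx_0=0$) and $D_zG(0,0,\mu_0)=QL|_{X_1}=L|_{X_1}$, an isomorphism onto $Y_1$. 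The implicit function theorem then gives a unique $C^{p-1}$ map $z=z(s,\mu)$ with $z(0,\mu_0)=0$ and $QF\bigl(s(x_0+z(s,\mu)),\mu\bigr)\equiv0$.

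The remaining scalar equation is $\bigl\langle\ell,F(s(x_0+z(s,\mu)),\mu)\bigr\rangle=0$. Using the same device I would show that
\[
\phi(s,\mu):=
\begin{cases}
\dfrac1s\bigl\langle\ell,\,F\bigl(s(x_0+z(s,\mu)),\mu\bigr)\bigr\rangle,& s\neq0,\\[2mm]
\bigl\langle\ell,\,F_x(0,\mu)(x_0+z(0,\mu))\bigr\rangle,& s=0,
\end{cases}
\]
is of class $C^{p-2}$, with $\phi(0,\mu_0)=\langle\ell,Lx_0\rangle=0$, and, differentiating at $(0,\mu_0)$ and using $L\,\partial_\mu z(0,\mu_0)\in Y_1=\mathrm{Ker}\,\ell$, $\partial_\mu\phi(0,\mu_0)=\langle\ell,F_{\mu x}(0,\mu_0)x_0\rangle\neq0$ by (iv). A second application of the implicit function theorem yields a $C^{p-2}$ curve $s\mapsto\mu(s)$ with $\mu(0)=\mu_0$ and $\phi(s,\mu(s))\equiv0$; then $x(s):=s\bigl(x_0+z(s,\mu(s))\bigr)$ defines $\Gamma_2=\{(x(s),\mu(s)):|s|\ \text{small}\}$ with $(x(0),\mu(0))=(0,\mu_0)$ and $x'(0)=x_0$, while the trivial solutions $(0,\mu)$ form $\Gamma_1$. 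The uniqueness built into the two implicit function theorem steps shows that every solution of $F(x,\mu)=0$ near $(0,\mu_0)$ lies on $\Gamma_1\cup\Gamma_2$ and that $\Gamma_1\cap\Gamma_2=\{(0,\mu_0)\}$.

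The one delicate point, and the main obstacle, is the regularity bookkeeping: one must verify that dividing by $s$ does not destroy smoothness and that exactly one derivative is lost at each of the two reductions, so that $\Gamma_1$ and $\Gamma_2$ come out $C^{p-2}$ (this is precisely where the hypothesis $p\ge3$, ensuring $C^1$ branches, is used). The Taylor/Hadamard integral‑remainder representation is what makes this rigorous; granting it, the argument is simply two applications of the implicit function theorem.
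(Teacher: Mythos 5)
The paper does not prove Theorem~\ref{thm-3}; it is stated as a preliminary and cited directly from Crandall and Rabinowitz~\cite{Cra}. Your Lyapunov--Schmidt reduction, with two applications of the implicit function theorem after a Hadamard-type division by $s$, is precisely the classical proof given in that reference, and the key computations are right: $D_zG(0,0,\mu_0)=QL|_{X_1}=L|_{X_1}$ is an isomorphism onto $Y_1$, $\phi(0,\mu_0)=\langle\ell,Lx_0\rangle=0$, and $\partial_\mu\phi(0,\mu_0)=\langle\ell,F_{\mu x}(0,\mu_0)x_0\rangle\neq0$ because $L\,\partial_\mu z(0,\mu_0)\in Y_1=\mathrm{Ker}\,\ell$, so condition (iv) applies.

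One point deserves to be made explicit in the uniqueness step. After rescaling to $x=s(x_0+z)$ and applying the implicit function theorem to the regularized map $G$, you obtain, for each small $(s,\mu)$, the unique \emph{small} $z\in X_1$ with $QF(s(x_0+z),\mu)=0$; but an arbitrary small solution $x=sx_0+w$ ($w\in X_1$) of $F(x,\mu)=0$ need not a priori have $w/s$ small, and when $s=0$, $w\neq0$ the rescaling is unavailable altogether. To close this, apply the implicit function theorem once more to the unrescaled map $\Psi(s,w,\mu):=QF(sx_0+w,\mu)$ at $(0,0,\mu_0)$, whose $w$-derivative there is again $L|_{X_1}$: its unique small zero $w=W(s,\mu)$ satisfies $W(0,\mu)\equiv0$ (since $QF(0,\mu)=0$), whence $W=O(s)$ and $z=W/s$ stays small. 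This shows that every nearby solution has the form $s(x_0+z(s,\mu))$ with $z$ small, and therefore lies on $\Gamma_1\cup\Gamma_2$; with this observation the argument is complete, and the regularity bookkeeping you describe yields the stated $C^{p-2}$ smoothness of the branches.
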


\section{Radially symmetric stationary solutions}

In this section, we study radially symmetric solutions to the system \eqref{eq(1.1)}-\eqref{eq(1.6)}, denoted by $(\sigma_s(r),p_s(r),\rho,R)$, where $r=|x|$.

First, it follows from
\eqref{eq(1.1)}, \eqref{eq(1.3)}, \eqref{eq(1.3.1)} and \eqref{eq(1.4)} that
\begin{equation}
\label{eq(2.1)}
\begin{cases}
&\sigma_s''(r)+\frac2r\sigma_s'(r)-\sigma_s(r)=0,\quad\rho<r<R,
\\
&\sigma_s(\rho)=\underline\sigma,\quad\sigma'_s(\rho)=0,\quad\sigma_s'(R)+\beta(\sigma_s(R)-1)=0.
\end{cases}
\end{equation}
Using \eqref{B-1}, \eqref{B-4} and \eqref{B-9}, we find
\begin{equation}
\label{eq(2.2)}
\sigma_s(r)=
\begin{cases}
\frac2{\pi}\underline\sigma\rho^2(k_1(\rho)i_0(r)+i_1(\rho)k_0(r))&\quad{\rm for}~\rho<r<R,
\\
\underline\sigma&\quad{\rm for}~0\le r\le\rho
\end{cases}
\end{equation}
with $\rho$, $R$ satisfying
\begin{equation}
\label{eq(2.3)}
(k_1(\rho)i_1(R)-i_1(\rho)k_1(R))
+\beta(k_1(\rho)i_0(R)+i_1(\rho)k_0(R))
=\frac{\pi\beta}{2\underline\sigma\rho^2}.
\end{equation}

Next we solve for $p_s$. In view of \eqref{eq(1.2)}, \eqref{eq(1.3.2)},
\eqref{eq(1.5)} and \eqref{eq(1.6)}, we find that $p_s$ satisfies
\begin{equation}
\label{eq(2.4)}
-p''_s(r)-\frac2r p'_s(r)=\mu(\sigma_s(r)-\tilde\sigma)\quad{\rm for}~\rho<r<R,
\end{equation}
\begin{equation}
\label{eq(2.5)}
p'_s(\rho)=0,\quad p_s(R)=\frac1R,\quad p'_s(R)=0.
\end{equation}
Let $q=p_s+\mu\sigma_s$. Then \eqref{eq(2.4)} reduces to
$$
\Delta q=\mu\tilde\sigma.
$$
Thus,
\begin{align*}
q(r)&=\frac16\mu\tilde\sigma r^2+C_1+C_2\frac1r,
\\
p_s(r)&=-\mu\sigma_s(r)+\frac16\mu\tilde\sigma r^2+C_1+C_2\frac1r,\quad\rho<r<R,
\end{align*}
where $C_1$, $C_2$ are constants.
Combining with the boundary condition \eqref{eq(2.5)}, we solve for $C_1$, $C_2$ and derive
\begin{equation}
\label{eq(2.21)}
p_s(r)=
\begin{cases}
-\mu(\sigma_s(r)-\sigma_s(R))+\frac16\mu\tilde\sigma(r^2-R^2)
+\frac13\mu\tilde\sigma\rho^3\left(\frac1r-\frac1R\right)+\frac1R\quad{\rm for}~\rho<r<R,
\\
-\mu(\underline\sigma-\sigma_s(R))+\frac16\mu\tilde\sigma(\rho^2-R^2)
+\frac13\mu\tilde\sigma\rho^3\left(\frac1\rho-\frac1R\right)+\frac1R\quad{\rm for}~0\le r\le\rho,
\end{cases}
\end{equation}
where $\rho$, $R$ satisfy
\begin{equation}\label{eq(2.19)}
\tilde\sigma=\frac{3R^2}{R^3-\rho^3}\sigma'_s(R)
=\frac{3R^2}{R^3-\rho^3}\frac{2\underline\sigma\rho^2}{\pi}
(k_1(\rho)i_1(R)-i_1(\rho)k_1(R)).
\end{equation}

In the sequel, we should solve the system of equations \eqref{eq(2.3)} and \eqref{eq(2.19)} with respect to the variables $\rho$ and $R$, for given $\beta$, $\underline\sigma$ and $\tilde\sigma$. However, for technical reasons, we will instead compute $R$, $\tilde\sigma$ for given $\rho$, $\beta$, $\underline\sigma$, as was done in \cite{Hao}. More precisely, we will establish the following result.

\begin{theorem}
\label{thm-1}
Assume $\mu>0$, $\beta>0$ and $0<\underline\sigma<1$. If we fix $\rho>0$, then there exist a unique $R$ in $(\rho,\infty)$
and a unique $\tilde\sigma$ in $(\underline\sigma,1)$ such that $(\sigma_s(r),p_s(r),\rho,R)$ is a radially symmetric solution to the problem \eqref{eq(1.1)}-\eqref{eq(1.6)}, where $\sigma_s(r)$, $p_s(r)$ are given by \eqref{eq(2.2)}, \eqref{eq(2.21)}.
\end{theorem}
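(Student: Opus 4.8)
\emph{Approach.} Since $\rho$ is fixed, the problem is essentially one of choosing the outer radius. Once $R\in(\rho,\infty)$ is prescribed, $\sigma_s$ is forced to be the solution of the initial value problem in \eqref{eq(2.1)}, namely \eqref{eq(2.2)}, and $p_s$ is then forced to be \eqref{eq(2.21)} provided the solvability condition \eqref{eq(2.19)} holds; the inner matching conditions \eqref{eq(1.3.1)}--\eqref{eq(1.3.2)} hold automatically in the radial case. Thus the statement reduces to two points: (a) there is a unique $R\in(\rho,\infty)$ for which the outer Robin condition \eqref{eq(2.3)} holds; (b) the resulting $\tilde\sigma$ defined by \eqref{eq(2.19)} satisfies $\tilde\sigma\in(\underline\sigma,1)$. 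I would argue through the ODE rather than the Bessel identities. Writing the equation in \eqref{eq(2.1)} as $(r^2\sigma_s')'=r^2\sigma_s$ and using $\sigma_s(\rho)=\underline\sigma$, $\sigma_s'(\rho)=0$ gives, for $r>\rho$,
\[
r^2\sigma_s'(r)=\int_\rho^r s^2\sigma_s(s)\,ds,\qquad \sigma_s''(r)=\sigma_s(r)-\tfrac2r\sigma_s'(r).
\]
A bootstrap on positivity shows $\sigma_s>0$ and $\sigma_s'>0$ on $(\rho,\infty)$ (positivity just past $\rho$ forces $\sigma_s'>0$ there, hence $\sigma_s$ increases and stays $\ge\underline\sigma>0$, so positivity cannot break down). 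Then, using the first identity and monotonicity,
\[
\tfrac2r\sigma_s'(r)=\tfrac{2}{r^3}\int_\rho^r s^2\sigma_s(s)\,ds<\tfrac{2}{r^3}\cdot\tfrac{r^3-\rho^3}{3}\,\sigma_s(r)<\sigma_s(r),
\]
so $\sigma_s''>0$ on $(\rho,\infty)$, i.e.\ $\sigma_s$ is strictly convex; and $\sigma_s'(r)\ge\underline\sigma(r^3-\rho^3)/(3r^2)$ forces $\sigma_s(r)\to\infty$ as $r\to\infty$. In particular there is a unique $R_1\in(\rho,\infty)$ with $\sigma_s(R_1)=1$.

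\emph{Step (a).} By \eqref{B-4}, \eqref{eq(2.3)} is exactly $\sigma_s'(R)+\beta(\sigma_s(R)-1)=0$. Since $\sigma_s'>0$ everywhere, any root must have $\sigma_s(R)<1$, i.e.\ $R\in(\rho,R_1)$, whereas on $[R_1,\infty)$ the left side is strictly positive. On $(\rho,R_1)$ the condition is equivalent to $w(R)=1/\beta$, where
\[
w(R):=\frac{1-\sigma_s(R)}{\sigma_s'(R)},\qquad w'(R)=-\frac{(\sigma_s'(R))^2+(1-\sigma_s(R))\,\sigma_s''(R)}{(\sigma_s'(R))^2}<0\quad\text{on }(\rho,R_1),
\]
the sign of $w'$ coming from $\sigma_s'>0$, $1-\sigma_s>0$, $\sigma_s''>0$ there. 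Since $w(R)\to+\infty$ as $R\to\rho^+$ (because $\sigma_s'(\rho)=0$ and $\sigma_s(\rho)=\underline\sigma<1$) and $w(R_1)=0$, $w$ is a continuous strictly decreasing bijection of $(\rho,R_1)$ onto $(0,\infty)$, so $w(R)=1/\beta$ has exactly one solution $R\in(\rho,R_1)$; this is the unique $R\in(\rho,\infty)$ solving \eqref{eq(2.3)}.

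\emph{Step (b).} With this $R$, \eqref{eq(2.19)} together with $R^2\sigma_s'(R)=\int_\rho^R s^2\sigma_s(s)\,ds$ gives
\[
\tilde\sigma=\frac{3R^2}{R^3-\rho^3}\,\sigma_s'(R)=\frac{3}{R^3-\rho^3}\int_\rho^R s^2\sigma_s(s)\,ds,
\]
i.e.\ $\tilde\sigma$ is the $s^2$-weighted mean of $\sigma_s$ over $[\rho,R]$. As $\sigma_s$ is continuous, strictly increasing and non-constant on $[\rho,R]$, we obtain $\underline\sigma=\sigma_s(\rho)<\tilde\sigma<\sigma_s(R)$, and $\sigma_s(R)<\sigma_s(R_1)=1$ since $R<R_1$; hence $\tilde\sigma\in(\underline\sigma,1)$. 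Finally $p_s$ is then uniquely given by \eqref{eq(2.21)} (note $R$ and $\tilde\sigma$ do not depend on $\mu$), completing the proof.

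\emph{Main obstacle.} Everything above is routine except the strict convexity $\sigma_s''>0$, which is what makes $w$ monotone and hence forces \emph{uniqueness} of the outer radius. Working directly with \eqref{eq(2.3)} in Bessel form instead, the needed monotonicity in $R$ of its left-hand side is less transparent (the term $\beta\,i_1(\rho)k_0'(R)$ has the ``wrong'' sign in \eqref{B-10}) and would require the identities \eqref{i,k-1}, \eqref{i,k-3}; the representation $r^2\sigma_s'(r)=\int_\rho^r s^2\sigma_s\,ds$ is precisely what circumvents this.
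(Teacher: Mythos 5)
Your proof is correct, and it takes a genuinely different route from the paper's. The paper works almost entirely with explicit Bessel expressions: existence/uniqueness of $R$ comes from the function $f(s)$ built from \eqref{eq(2.3)}, with $f(\rho)<0$, $f'(s)>0$, $f(\infty)=\infty$; and each inequality in the chain $\underline\sigma<\tilde\sigma<\sigma_s(R)<1$ is reduced to an explicit inequality in $\rho,R$ via \eqref{i,k-1}--\eqref{i,k-3} and then verified by hand (a sign argument using \eqref{sq-1} for the first, and a fourth-order Taylor expansion of an auxiliary function $h(s)$ for the second). You instead exploit the ODE structure: the integral identity $r^2\sigma_s'(r)=\int_\rho^r s^2\sigma_s\,ds$ gives $\sigma_s>0$, $\sigma_s'>0$, and $\sigma_s''>0$ on $(\rho,\infty)$ by a short bootstrap, and uniqueness of $R$ then falls out from the strict monotonicity of $w(R)=(1-\sigma_s(R))/\sigma_s'(R)$; the bounds on $\tilde\sigma$ are then immediate once you recognize $\tilde\sigma=\bigl(\int_\rho^R s^2\sigma_s\,ds\bigr)/\bigl(\int_\rho^R s^2\,ds\bigr)$ as a weighted average of the strictly increasing function $\sigma_s$. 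Your mean-value observation replaces the paper's Taylor-remainder computation for $\tilde\sigma<\sigma_s(R)$ with a one-line argument, which is a genuine simplification. One small inaccuracy in your closing remark: the paper's monotonicity proof for $f'(s)$ (i.e.\ step (a)) does not actually need \eqref{i,k-1}, \eqref{i,k-3} — the negative contribution from $k_0'(R)$ pairs with $i_0'(R)$ to produce $i_1(s)k_1(\rho)-i_1(\rho)k_1(s)>0$ directly via \eqref{B-10}; those identities are used only for the $\tilde\sigma<\sigma_s(R)$ bound in step (b). This does not affect the correctness of your own argument.
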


\begin{proof}
In view of \eqref{eq(2.3)}, we consider the function
$$
f(s)=(k_1(\rho)i_1(s)-i_1(\rho)k_1(s))
+\beta(k_1(\rho)i_0(s)+i_1(\rho)k_0(s))
-\frac{\pi\beta}{2\underline\sigma\rho^2},\quad s\ge\rho.
$$
Then from \eqref{B-4}, \eqref{B-9} it follows that
\begin{align*}
&f(\rho)=\frac{\pi\beta}{2\rho^2}\left(1-\frac1{\underline\sigma}\right),
\\
&f'(s)=(k_1(\rho)i'_1(s)-i_1(\rho)k'_1(s))+\beta(i_1(s)k_1(\rho)-i_1(\rho)k_1(s)).
\end{align*}
Since $\beta>0$ and $0<\underline\sigma<1$, $f(\rho)<0$. By \eqref{B-10} we know that $f'(s)>0$ for $s>\rho$.
Thus, combining with $\lim_{s\to+\infty}f(s)=+\infty$, we deduce that there exists a unique $R>\rho$ such that \eqref{eq(2.3)} holds.
In what follows, we prove that $\tilde\sigma$ given by \eqref{eq(2.19)} satisfies
\begin{equation}\label{eq(2.7)}
\underline\sigma<\tilde\sigma<\sigma_s(R)<1.
\end{equation}
First, we show that $\underline\sigma<\tilde\sigma$, which reduces to
\begin{equation}
\label{eq(2.20)}
3(R-\rho)\cosh(R-\rho)+3(R\rho-1)\sinh(R-\rho)-R^3+\rho^3>0
\end{equation}
by \eqref{i,k-1}. As a matter of fact, define
$$
g(s)=3(s-\rho)\cosh(s-\rho)+3(s\rho-1)\sinh(s-\rho)-s^3+\rho^3,\quad s\ge\rho.
$$
Then, clearly, $g(\rho)=0$ and
\begin{equation*}
g'(s)=3s\big\{\rho[\cosh(s-\rho)-1]+[\sinh(s-\rho)-(s-\rho)]\big\}.
\end{equation*}
Since, as may easily be verified,
\begin{equation}
\label{sq-1}
\cosh z>\frac{\sinh z}z>1,\quad z>0,
\end{equation}
$g'(s)>0$ for $s>\rho$. Thus, $g(s)>0$ for every $s>\rho$, i.e., \eqref{eq(2.20)} is valid.

We next proceed to verify that $\tilde\sigma<\sigma_s(R)$, which is equivalent to
\begin{equation}
\label{eq(2.17)}
(R^3-\rho^3)(k_1(\rho)i_0(R)+i_1(\rho)k_0(R))>
3R^2(k_1(\rho)i_1(R)-i_1(\rho)k_1(R))
\end{equation}
by \eqref{eq(2.2)} and \eqref{eq(2.19)}.
Based on \eqref{i,k-1} and \eqref{i,k-3}, \eqref{eq(2.17)} can be further
simplified to the form
\begin{equation}
\label{eq(2.18)}
(\rho+1)(R^3-3R^2+3R-\rho^3)e^{2(R-\rho)}
+(\rho-1)(R^3+3R^2+3R-\rho^3)>0.
\end{equation}
Consider the function
$$
h(s)=(\rho+1)(s^3-3s^2+3s-\rho^3)e^{2(s-\rho)}
+(\rho-1)(s^3+3s^2+3s-\rho^3)\quad{\rm for}~s\ge\rho.
$$
A direct calculation shows that
\begin{align*}
h'(s)&=(\rho+1)(2s^3-3s^2-2\rho^3+3)e^{2(s-\rho)}
+3(\rho-1)(s+1)^2,
\\
h''(s)&=2(\rho+1)(2s^3-3s-2\rho^3+3)e^{2(s-\rho)}+6(\rho-1)(s+1),
\\
h'''(s)&=2(\rho+1)(4s^3+6s^2-6s-4\rho^3+3)e^{2(s-\rho)}+6(\rho-1),
\\
h^{(4)}(s)&=16(\rho+1)(s^3+3s^2-\rho^3)e^{2(s-\rho)}>0\quad{\rm for}~s>\rho,
\end{align*}
and in particular,
$$
h(\rho)=h'(\rho)=h''(\rho)=0,\quad h'''(\rho)=12\rho^3.
$$
Applying Taylor's formula with Lagrange's remainder-term
$$
h(s)=h(\rho)+h'(\rho)(s-\rho)+\frac{h''(\rho)}{2!}(s-\rho)^2+
\frac{h'''(\rho)}{3!}(s-\rho)^3+\frac{h^{(4)}(\xi)}{4!}(s-\rho)^4,
$$
where $\xi\in(\rho,s)$, provided that $s>\rho$, we see that $h(s)>0$
for $s>\rho$. Hence, \eqref{eq(2.18)} follows.

Finally, we prove that $\sigma_s(R)<1$. Indeed, from \eqref{eq(2.1)} and the fact $\sigma_s'(R)>0$, it is easy to see that
$$
\sigma_s(R)=1-\frac{\sigma_s'(R)}\beta<1.
$$
The proof is complete.
\end{proof}


\section{Linearized problem}

In this section, we study the linearization of the problem \eqref{eq(1.1)}-\eqref{eq(1.6)} at the radially symmetric solution $(\sigma_s,p_s,\rho,R)$, where
$(\rho,R)$ satisfies \eqref{eq(2.3)}, \eqref{eq(2.19)}.
In order to compute Fr\'echet derivatives in the next section, we shall derive rigorous mathematical estimates
for this linearization.

Consider a family of domains with perturbed boundaries
\begin{align*}
\partial\eps{D}: r&=\rho+\ep T(\theta,\varphi),
\\
\partial\eps{\Omega}: r&=R+\ep S(\theta,\varphi),
\end{align*}
where both $S$ and $T$ are unknown functions.
Let $(\sigma,p)$ be the solution to
\begin{align}
&\Delta\sigma=\sigma I_{\eps{\Omega}\setminus\eps{D}}\quad {\rm in}~\eps{\Omega},\label{eq(3.3)}
\\
-&\Delta p=\mu(\sigma-\tilde{\sigma})I_{\eps{\Omega}\setminus\eps{D}}\quad {\rm in}~\eps{\Omega},\label{eq(3.4)}
\\
&\sigma=\underline\sigma,\quad[\partial_{\bf n}\sigma]=0 \quad{\rm on}~\partial\eps{D},\label{eq(3.5)}
\\
&\partial_{\bf n}\sigma+\beta(\sigma-1)=0\quad {\rm on}~\partial\eps{\Omega},\label{eq(3.6)}
\\
&[p]=0,\quad[\partial_{\bf n}p]=0 \quad{\rm on}~\partial\eps{D},\label{eq(3.7)}
\\
&p=\kappa\quad {\rm on}~\partial\eps{\Omega}.\label{eq(3.8)}
\end{align}
Then the well-posedness of the problem \eqref{eq(3.3)}-\eqref{eq(3.8)} follows from the local flattening and
diffraction problem for systems; see \cite{LU}.
If we define
\begin{equation}\label{F}
F(\tilde{R},\mu)=\frac{\partial p}{\partial\bf n}\Big|_{\partial\eps{\Omega}},
\end{equation}
where $\tilde{R}=\ep S$,
then $(\sigma,p,\rho+\ep T,R+\ep S)$ is a solution to the system \eqref{eq(1.1)}-\eqref{eq(1.6)} if and only
if $F(\tilde R,\mu)=0$.

Let us formally write
\begin{equation}\label{sigma}
\sigma(r,\theta,\varphi)=
\begin{cases}
\hat{\sigma}(r)+\ep\sigma_1(r,\theta,\varphi)+O(\ep^2)&\quad{\rm in}~\eps{\Omega}\setminus\eps{D},
\\
\hat{\sigma}(\rho)+\ep\sigma_1(r,\theta,\varphi)+O(\ep^2)&\quad{\rm in}~\eps{D},
\end{cases}
\end{equation}
\begin{equation}\label{p}
p(r,\theta,\varphi)=
\begin{cases}
\hat{p}(r)+\ep p_1(r,\theta,\varphi)+O(\ep^2)&\quad{\rm in}~\eps{\Omega}\setminus\eps{D},
\\
\hat{p}(\rho)+\ep p_1(r,\theta,\varphi)+O(\ep^2)&\quad{\rm in}~\eps{D},
\end{cases}
\end{equation}
where
$$
\hat{\sigma}(r)=\frac2{\pi}\underline\sigma\rho^2(k_1(\rho)i_0(r)+i_1(\rho)k_0(r)),
~0<r<\infty,
$$
$$
\hat{p}(r)=-\mu(\hat\sigma(r)-\hat\sigma(R))+\frac16\mu\tilde\sigma(r^2-R^2)
+\frac13\mu\tilde\sigma\rho^3\left(\frac1r-\frac1R\right)+\frac1R,~0<r<\infty.
$$
From \eqref{eq(2.2)} and \eqref{eq(2.21)} we can easily see that
\begin{equation}
\label{hatsigma}
\Delta\hat{\sigma}=\hat{\sigma}\quad{\rm in}~{\mathbb R}^3\setminus\{0\},\quad
\hat{\sigma}(r)=\sigma_s(r)\quad{\rm for}~\rho<r<R,
\end{equation}
\begin{equation}
\label{hatp}
-\Delta\hat{p}=\mu(\hat{\sigma}-\tilde\sigma)\quad{\rm in}~{\mathbb R}^3\setminus\{0\},\quad
\hat{p}(r)=p_s(r)\quad{\rm for}~\rho<r<R.
\end{equation}

In the following we shall first present a formal derivation of the linearized problem for $(\sigma_1,p_1)$.
Recall that the gradient operator in $\mathbb{R}^3$ can be written as
$$
\nabla=\vec{e}_r\partial_r+\vec{e}_\theta\frac1r\partial_\theta+
\vec{e}_\varphi\frac1{r\sin\theta}\partial_\varphi=\vec{e}_r\partial_r+
\frac1r\nabla_\omega,
$$
and if a surface is given by $r=R+\ep S(\theta,\varphi)$, then
\begin{equation*}
{\bf n}=\vec{e}_r-\frac{\ep}R\nabla_\omega S+\ep^2f_1,
\end{equation*}
where $\|f_1\|_{C^{l-1+\alpha}(\Sigma)}\le C_l$ provided that $\|S\|_{C^{l+\alpha}(\Sigma)}\le1$.
Thus, from \eqref{eq(3.6)}, \eqref{sigma}, \eqref{hatsigma} and \eqref{eq(2.1)}, we derive
\begin{equation*}
\label{sigma-1}
\begin{split}
0=&\left(\frac{\partial\sigma}{\partial r}\bigg|_{\partial\eps{\Omega}}\vec{e}_r+\frac1R\nabla_\omega\sigma\bigg|_{\partial\eps{\Omega}}\right)\cdot
\left(\vec{e}_r-\frac{\ep}R\nabla_\omega S\right)+\beta\sigma\bigg|_{\partial\eps{\Omega}}-\beta+O(\ep^2)
\\
=&\frac{\partial\sigma}{\partial r}\bigg|_{\partial\eps{\Omega}}
-\frac{\ep}{R^2}\nabla_\omega\sigma\bigg|_{\partial\eps{\Omega}}\cdot\nabla_\omega S
+\beta\sigma\bigg|_{\partial\eps{\Omega}}-\beta+O(\ep^2)
\\
=&\frac{\partial\hat{\sigma}}{\partial r}\bigg|_{\partial\eps{\Omega}}+\ep\frac{\partial\sigma_1}{\partial r}\bigg|_{\partial\eps{\Omega}}+\beta\hat{\sigma}\bigg|_{\partial\eps{\Omega}}
+\beta\ep\sigma_1\bigg|_{\partial\eps{\Omega}}-\beta+O(\ep^2)
\\
=&\frac{\partial\sigma_s}{\partial r}\bigg|_{\partial B_R}+\frac{\partial^2\sigma_s}{\partial r^2}\bigg|_{\partial B_R}\ep S+\ep\frac{\partial\sigma_1}{\partial r}\bigg|_{\partial B_R}+\beta\sigma_s\bigg|_{\partial B_R}+\beta\frac{\partial\sigma_s}{\partial r}\bigg|_{\partial B_R}\ep S+\beta\ep\sigma_1\bigg|_{\partial B_R}
\\
&-\beta+O(\ep^2)
\\
=&\ep\left[\left(\frac{\partial\sigma_1}{\partial r}+\beta\sigma_1\right)\bigg|_{\partial B_R}+\lambda S\right]+O(\ep^2),
\end{split}
\end{equation*}
where
$$
\lambda=\bigg(\frac{\partial^2\sigma_s}{\partial r^2}+\beta\frac{\partial\sigma_s}{\partial r}\bigg)\bigg|_{\partial B_R}.
$$
Using \eqref{eq(2.1)}, \eqref{eq(2.2)} and \eqref{B-4}, we get
$$
\frac{\partial\sigma_s}{\partial r}\bigg|_{\partial B_R}=\frac2{\pi}\underline\sigma\rho^2
(k_1(\rho)i_1(R)-i_1(\rho)k_1(R)),
$$
and
$$
\frac{\partial^2\sigma_s}{\partial r^2}\bigg|_{\partial B_R}=\frac2{\pi}\underline\sigma\rho^2
\left[(i_0(R)k_1(\rho)+i_1(\rho)k_0(R))-\frac2R(i_1(R)k_1(\rho)-i_1(\rho)k_1(R))\right],
$$
so that
\begin{equation}\label{lambda}
\lambda=\frac2{\pi}\underline\sigma\rho^2
\bigg[i_0(R)k_1(\rho)+i_1(\rho)k_0(R)
+\bigg(\beta-\frac2R\bigg)(i_1(R)k_1(\rho)
-i_1(\rho)k_1(R))\bigg].
\end{equation}
Substituting \eqref{sigma} into the boundary condition \eqref{eq(3.5)} and using \eqref{hatsigma}, \eqref{eq(2.1)} again, we have
\begin{align*}
0=&\sigma^+\big|_{\partial\eps{D}}-\underline\sigma
=\hat{\sigma}\big|_{\partial\eps{D}}+\ep\sigma_1^+\big|_{\partial\eps{D}}-\underline\sigma+O(\ep^2)
\\
=&\sigma_s(\rho)+\sigma_s'(\rho)\ep T+\ep\sigma_1^+\big|_{\partial B_\rho}-\underline\sigma+O(\ep^2)
\\
=&\ep\sigma_1^+\big|_{\partial B_\rho}+O(\ep^2),
\\
0=&\sigma^-\big|_{\partial\eps{D}}-\underline\sigma
=\hat{\sigma}(\rho)+\ep\sigma_1^-\big|_{\partial\eps{D}}-\underline\sigma+O(\ep^2)
\\
=&\ep\sigma_1^-\big|_{\partial B_\rho}+O(\ep^2).
\end{align*}
Analogously,
\begin{align*}
0=&\frac{\partial\sigma^+}{\partial{\bf n}}\bigg|_{\partial\eps{D}}
=\frac{\partial\sigma^+}{\partial r}\bigg|_{\partial\eps{D}}
-\frac{\ep}{\rho^2}\nabla_\omega\sigma^+\bigg|_{\partial\eps{D}}\cdot\nabla_\omega T
+O(\ep^2)
\\
=&\frac{\partial\hat{\sigma}}{\partial r}\bigg|_{\partial\eps{D}}
+\ep\frac{\partial\sigma_1^+}{\partial r}\bigg|_{\partial\eps{D}}
+O(\ep^2)
\\
=&\hat{\sigma}'(\rho)+\hat{\sigma}''(\rho)\ep T+\ep\frac{\partial\sigma_1^+}{\partial r}\bigg|_{\partial B_\rho}
+O(\ep^2)
\\
=&\ep\left(\frac{\partial\sigma_1^+}{\partial r}\bigg|_{\partial B_\rho}+\underline\sigma T\right)
+O(\ep^2).
\end{align*}
Hence,  $\sigma_1$ satisfies:
\begin{align}
&-\Delta\sigma_1=0\quad{\rm in}~B_\rho,\label{sigma-1-1}
\\
&-\Delta\sigma_1+\sigma_1=0\quad{\rm in}~B_R\setminus\overline{B}_\rho,\label{sigma-1-2}
\\
&\sigma_1=0,\quad\frac{\partial\sigma_1^+}{\partial r}=-\underline\sigma T\quad{\rm on}~\partial B_\rho,\label{sigma-1-3}
\\
&\frac{\partial\sigma_1}{\partial r}+\beta\sigma_1=-\lambda S
\quad{\rm on}~\partial B_R.\label{sigma-1-4}
\end{align}

We now expand the boundary conditions for $p$.
By the expression from \cite{Fri7}
\begin{equation*}
\kappa\bigg|_{\partial\eps{\Omega}}=
\frac1R-\frac{\ep}{R^2}\left(S+\frac12\Delta_\omega S\right)
+\ep^2 f_2,
\end{equation*}
where $\|f_2\|_{C^{l-2+\alpha}(\Sigma)}\le C_l$ if $\|S\|_{C^{l+\alpha}(\Sigma)}\le1$, we deduce from \eqref{eq(3.8)}, \eqref{p}, \eqref{hatp} and \eqref{eq(2.5)} that
\begin{equation*}
\begin{aligned}
0=&\hat{p}\bigg|_{\partial\eps{\Omega}}+\ep p_1\bigg|_{\partial\eps{\Omega}}-\frac1R+\frac{\ep}{R^2}\left(S+\frac12\Delta_\omega S\right)
+O(\ep^2)
\\
=&p_s\bigg|_{\partial B_R}+\frac{\partial p_s}{\partial r}\bigg|_{\partial B_R}\ep S+\ep p_1\bigg|_{\partial B_R}
-\frac1R+\frac{\ep}{R^2}\left(S+\frac12\Delta_\omega S\right)
+O(\ep^2)
\\
=&\ep\left[p_1\bigg|_{\partial B_R}+\frac1{R^2}\left(S+\frac12\Delta_\omega S\right)\right]+O(\ep^2).
\end{aligned}
\end{equation*}
The  boundary condition \eqref{eq(3.7)} together with \eqref{p}, \eqref{hatp} and \eqref{eq(2.5)} leads to
\begin{equation*}
0=p^+\big|_{\partial\eps{D}}-p^-\big|_{\partial\eps{D}}=\hat{p}\big|_{\partial\eps{D}}
+\ep p_1^+\big|_{\partial\eps{D}}-\hat{p}(\rho)-\ep p_1^-\big|_{\partial\eps{D}}+O(\ep^2)
=\ep[p_1]\big|_{\partial B_\rho}+O(\ep^2),
\end{equation*}
and
\begin{align*}
0=&\frac{\partial p^+}{\partial{\bf n}}\bigg|_{\partial\eps{D}}
-\frac{\partial p^-}{\partial{\bf n}}\bigg|_{\partial\eps{D}}
\\
=&\frac{\partial p^+}{\partial r}\bigg|_{\partial\eps{D}}
-\frac{\partial p^-}{\partial r}\bigg|_{\partial\eps{D}}-\frac{\ep}{\rho^2}\left(\nabla_\omega p^+\bigg|_{\partial\eps{D}}\cdot\nabla_\omega T-\nabla_\omega p^-\bigg|_{\partial\eps{D}}\cdot\nabla_\omega T\right)
+O(\ep^2)
\\
=&\frac{\partial\hat{p}}{\partial r}\bigg|_{\partial\eps{D}}
+\ep\frac{\partial p_1^+}{\partial r}\bigg|_{\partial\eps{D}}
-\ep\frac{\partial p_1^-}{\partial r}\bigg|_{\partial\eps{D}}
+O(\ep^2)
\\
=&\frac{\partial^2\hat{p}}{\partial r^2}\bigg|_{\partial B_\rho}\ep T+\ep\frac{\partial p_1^+}{\partial r}\bigg|_{\partial B_\rho}
-\ep\frac{\partial p_1^-}{\partial r}\bigg|_{\partial B_\rho}
+O(\ep^2)
\\
=&\ep\left([\partial_{\bf n}p_1]\big|_{\partial B_\rho}+\mu(\tilde{\sigma}
-\underline\sigma)T\right)+O(\ep^2).
\end{align*}
Therefore,  $p_1$ satisfies
\begin{align}
&-\Delta p_1=0\quad{\rm in}~B_\rho,\label{p-1-1}
\\
&-\Delta p_1=\mu\sigma_1\quad{\rm in}~B_R\setminus\overline{B}_\rho,\label{p-1-2}
\\
&[p_1]=0,\quad
[\partial_{\bf n}p_1]
=-\mu(\tilde\sigma-\underline\sigma)T\quad{\rm on}~\partial B_\rho,\label{p-1-3}
\\
&p_1=-\frac1{R^2}\bigg(S+\frac12\Delta_\omega S\bigg)\quad{\rm on}~\partial B_R.\label{p-1-4}
\end{align}

We next compute explicitly the functions $\sigma_1$, $p_1$ and $T$. If we write
\begin{equation}
\label{S}
S(\theta,\varphi)=\sum_{n=0}^\infty\sum_{m=-n}^n a_{n,m} Y_{n,m}(\theta,\varphi),
\end{equation}
then solving \eqref{sigma-1-2}-\eqref{sigma-1-4} by the separation of variables and using \eqref{Y}, we get
\begin{equation}
\label{sigma-1+}
\sigma_1(r,\theta,\varphi)=-\lambda\sum_{n=0}^\infty\sum_{m=-n}^n a_{n,m} Q_n(r)Y_{n,m}(\theta,\varphi)\quad{\rm in}~B_R\setminus\overline{B}_\rho,
\end{equation}
where $Q_n(r)$ satisfies
\begin{align}
&Q''_n(r)+\frac2rQ'_n(r)-\left(1+\frac{n(n+1)}{r^2}\right)Q_n(r)=0,\quad \rho<r<R,\label{Qn-equ}
\\
&Q_n(\rho)=0,\quad Q'_n(R)+\beta Q_n(R)=1,\label{Qn-bou}
\end{align}
i.e.,
\begin{equation}
\label{Qn}
Q_n(r)=\frac{i_n(r)k_n(\rho)-i_n(\rho)k_n(r)}
{\left(\frac{n}R+\beta\right)(i_n(R)k_n(\rho)-i_n(\rho)k_n(R))+i_n(\rho)k_{n+1}(R)
+i_{n+1}(R)k_n(\rho)}
\end{equation}
for $\rho<r<R$. Substituting \eqref{sigma-1+} into the second equation in \eqref{sigma-1-3} yields
\begin{equation}
\label{T}
T(\theta,\varphi)
=\frac{\lambda}{\underline\sigma}\sum_{n=0}^\infty\sum_{m=-n}^n a_{n,m} Q'_n(\rho)Y_{n,m}(\theta,\varphi).
\end{equation}
Similarly, setting
\begin{equation}
\label{p1}
p_1(r,\theta,\varphi)=\sum_{n=0}^\infty\sum_{m=-n}^n a_{n,m}P_n(r)Y_{n,m}(\theta,\varphi)
\end{equation}
in \eqref{p-1-1}-\eqref{p-1-4}, we are led to
\begin{align*}
&
P''_n(r)+\frac2rP'_n(r)-\frac{n(n+1)}{r^2}P_n(r)=0\quad{\rm for}~0<r<\rho,
\\
&
P''_n(r)+\frac2rP'_n(r)-\frac{n(n+1)}{r^2}P_n(r)=\lambda\mu Q_n(r)\quad{\rm for}~\rho<r<R,
\\
&P'_0(0)=0,\quad P_n(0)=0\quad{\rm for}~n\ge1,
\\
&P_n(\rho+)=P_n(\rho-),\quad
P'_n(\rho+)-P'_n(\rho-)=-\lambda\mu\frac{\tilde\sigma-\underline\sigma
}{\underline\sigma}Q_n'(\rho),
\\
&P_n(R)=-\frac{1}{R^2}\left(1-\frac{n(n+1)}2\right).
\end{align*}
Solving the ODE, we obtain
\begin{equation}
\label{Pn-}
P_n(r)=
\frac{\lambda\mu\tilde\sigma\rho^{n+2}Q_n'(\rho)}{(2n+1)\underline\sigma}
\left(\frac1{\rho^{2n+1}}-\frac1{R^{2n+1}}\right)r^n
-\lambda\mu\frac{r^n}{R^n}Q_n(R)
-\frac{r^n}{R^{n+2}}\left(1-\frac{n(n+1)}2\right)
\end{equation}
for $0<r\le\rho$, and
\begin{equation}
\label{Pn+}
\begin{split}
P_n(r)=&
\frac{\lambda\mu\tilde\sigma\rho^{n+2}Q_n'(\rho)}{(2n+1)\underline\sigma}
\left(\frac1{r^{2n+1}}-\frac1{R^{2n+1}}\right)r^n
+\lambda\mu\left(Q_n(r)-\frac{r^n}{R^n}Q_n(R)\right)
\\
&-\frac{r^n}{R^{n+2}}\left(1-\frac{n(n+1)}2\right)\quad{\rm for}~\rho<r<R.
\end{split}
\end{equation}

To proceed further, we need more information on $Q_n$.

\begin{lemma}
\label{lem-1}
$\{Q_n'(\rho)\}$ is a positive, monotonically decreasing sequence in $n$, while $\{Q_n'(R)\}$ is a positive, monotonically increasing sequence in $n$.
\end{lemma}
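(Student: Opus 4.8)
The plan is to avoid the explicit Bessel formula \eqref{Qn} as far as possible and argue directly from the two-point boundary value problem \eqref{Qn-equ}--\eqref{Qn-bou}. Write $L_nQ:=Q''+\frac2rQ'-c_n(r)Q$ with $c_n(r):=1+\frac{n(n+1)}{r^2}$; the feature used at every step is that $c_n>0$ on $(\rho,R)$. (For the positivity statements alone one can instead invoke \eqref{Qn}: with \eqref{B-4} and the Wronskian identity \eqref{B-9} one finds $Q_n'(\rho)=\frac{\pi}{2\rho^2D_n}$, where $D_n$ denotes the denominator of \eqref{Qn}, and $Q_n'(R)=(i_n'(R)k_n(\rho)-i_n(\rho)k_n'(R))/D_n$; monotonicity of $i_n/k_n$, again from \eqref{B-9}, gives $D_n>0$, while \eqref{B-10} gives positivity of the numerator of $Q_n'(R)$. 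I would keep this as a fallback but carry out the monotonicity by a comparison argument.)

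First I would show $Q_n\ge0$ on $[\rho,R]$: a negative interior minimum at $r_0$ is impossible, since there $Q_n'(r_0)=0$, $Q_n''(r_0)\ge0$, whereas $Q_n''(r_0)=c_n(r_0)Q_n(r_0)<0$; a negative minimum at $r=R$ is impossible, since it would force $Q_n'(R)\le0$ and hence $Q_n'(R)+\beta Q_n(R)<0$, contradicting \eqref{Qn-bou}; and $Q_n(\rho)=0$. As $Q_n\not\equiv0$, the strong maximum principle upgrades this to $Q_n>0$ in $(\rho,R)$, and Hopf's lemma at $r=\rho$ gives $Q_n'(\rho)>0$. Because $c_n>0$, $Q_n$ has no interior local maximum (there $Q_n'=0$, $Q_n''\le0$, but $Q_n''=c_nQ_n>0$), so $\max_{[\rho,R]}Q_n=Q_n(R)>0$, and Hopf's lemma at $r=R$ gives $Q_n'(R)>0$. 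This settles positivity of both sequences.

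For the monotonicity I would fix $n$ and set $w:=Q_n-Q_{n+1}$. Since $c_{n+1}-c_n=\frac{2(n+1)}{r^2}$, we get $L_nw=-\frac{2(n+1)}{r^2}Q_{n+1}\le0$ on $(\rho,R)$, with $w(\rho)=0$ and $w'(R)+\beta w(R)=0$ from \eqref{Qn-bou}. The same sign analysis as above gives $w\ge0$ on $[\rho,R]$ and then $w>0$ in $(\rho,R)$. Hopf at $r=\rho$ yields $w'(\rho)>0$, i.e. $Q_n'(\rho)>Q_{n+1}'(\rho)$, so $\{Q_n'(\rho)\}$ is strictly decreasing. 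For the other sequence note that $w(R)>0$ — if $w(R)=0$ the Robin condition forces $w'(R)=0$, contradicting Hopf at $r=R$ — whence $w'(R)=-\beta w(R)<0$, i.e. $Q_n'(R)<Q_{n+1}'(R)$, so $\{Q_n'(R)\}$ is strictly increasing.

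The main obstacle is not a single hard estimate but the careful and consistent handling of the Robin condition at $r=R$ inside the maximum-principle and Hopf arguments: it is precisely the relation $w'(R)+\beta w(R)=0$, combined with $w\ge0$, that simultaneously excludes a boundary minimum at $R$ and forces $w'(R)<0$, and this sign flip is exactly what makes $\{Q_n'(R)\}$ increase while $\{Q_n'(\rho)\}$ decreases. One must also check that every appeal to the maximum principle and to Hopf's lemma genuinely uses the strict positivity of $c_n(r)=1+n(n+1)/r^2$, so the argument is tied to the structure of \eqref{Qn-equ} rather than being generic.
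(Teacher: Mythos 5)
Your proof is correct, and the monotonicity argument is exactly the paper's: you compare $Q_n$ and $Q_{n+1}$ through their difference (the paper writes $F_n=Q_{n+1}-Q_n$; you use $w=Q_n-Q_{n+1}$), note that it solves a one-signed inhomogeneous equation with zero Dirichlet data at $\rho$ and homogeneous Robin data at $R$, and read off all four sign conclusions via the maximum principle and Hopf's lemma. The only divergence is the positivity step, where the paper obtains $Q_n'(\rho),Q_n'(R)>0$ directly from the explicit formula \eqref{Qn} together with \eqref{B-10}, whereas you run a maximum-principle/Hopf argument on $Q_n$ itself (using the coefficient sign $1+n(n+1)/r^2>0$ to exclude interior extrema and to apply Hopf at both endpoints); both are valid, with yours being somewhat more self-contained.
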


\begin{proof}
From \eqref{Qn} it follows that
\begin{equation*}
Q'_n(r)=\frac{i'_n(r)k_n(\rho)-i_n(\rho)k'_n(r)}
{\left(\frac{n}R+\beta\right)(i_n(R)k_n(\rho)-i_n(\rho)k_n(R))+i_n(\rho)k_{n+1}(R)
+i_{n+1}(R)k_n(\rho)},
\end{equation*}
which together with \eqref{B-10} implies that $Q'_n(r)>0$, $\rho\le r\le R$; particularly,
$Q_n'(\rho)>0$, $Q_n'(R)>0$. Moreover, from \eqref{Qn-bou},
\begin{equation}
\label{Qn-1}
0=Q_n(\rho)<Q_n(r)\le Q_n(R)<\frac1{\frac{n}R+\beta}\quad{\rm for}~\rho<r\le R.
\end{equation}
Let $F_n(r)=Q_{n+1}(r)-Q_n(r)$. Then by virtue of \eqref{Qn-equ} and \eqref{Qn-bou}, $F_n$ satisfies
\begin{equation*}
\begin{cases}
-\Delta F_n+\left(1+\frac{n(n+1)}{r^2}\right)F_n=-\frac{2(n+1)}{r^2}Q_{n+1}\quad{\rm in}~B_R\setminus\overline{B}_\rho,\\
F_n=0\quad{\rm on}~\partial B_\rho,\\
\frac{\partial F_n}{\partial{\bf n}}+\beta F_n=0\quad{\rm on}~\partial B_R.
\end{cases}
\end{equation*}
By \eqref{Qn-1}, we apply the maximum principle to conclude that
$F_n(r)<0$ for $\rho<r\le R$, $F'_n(\rho)<0$ and $F'_n(R)>0$. The proof is complete.
\end{proof}

\begin{remark}
\label{rem-4.1}
In fact, from the proof of Lemma \ref{lem-1}, one can also see that
for every $\rho<r\le R$, $\{Q_n(r)\}$ is a positive, monotonically decreasing sequence in $n$,
and $\lim_{n\to\infty}Q_n(r)=0$ by \eqref{Qn-1}. In particular,
$$
\lim_{n\to\infty}Q_n(R)=0.
$$
As a result,
\begin{equation}
\label{Qn-2}
\lim_{n\to\infty}Q_n'(R)=\lim_{n\to\infty}(1-\beta Q_n(R))=1.
\end{equation}
\end{remark}

\begin{lemma}
\label{lem-2}
Given $\rho<r\le R$,
$\{Q_n'(r)/Q_n(r)-n/r\}$
is a positive, monotonically decreasing sequence in $n$. Moreover,
\begin{equation}
\label{Qn-3}
\lim_{n\to\infty}\left(\frac{Q_n'(R)}{Q_n(R)}-\frac n{R}\right)=0.
\end{equation}
\end{lemma}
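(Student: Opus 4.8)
The plan is to reduce the statement about the sequence $\{Q_n'(r)/Q_n(r)-n/r\}$ to a statement about modified spherical Bessel functions, then exploit the recurrence relations \eqref{B-3}, \eqref{B-4} together with the positivity facts \eqref{B-10} and the Wronskian-type identity \eqref{B-9}. First I would record the closed form for the logarithmic derivative of $Q_n$. From \eqref{Qn} the denominator is independent of $r$, so
\[
\frac{Q_n'(r)}{Q_n(r)}=\frac{i_n'(r)k_n(\rho)-i_n(\rho)k_n'(r)}{i_n(r)k_n(\rho)-i_n(\rho)k_n(r)}.
\]
Applying \eqref{B-4} in the form $g_n'(s)=g_{n+1}(s)+\tfrac{n}{s}g_n(s)$ to both $i_n'$ and $k_n'$ (recall $g_n=i_n$ or $(-1)^n k_n$, so for $k_n$ one gets $k_n'(s)=-k_{n+1}(s)+\tfrac{n}{s}k_n(s)$), the numerator becomes $\bigl(i_{n+1}(r)k_n(\rho)+i_n(\rho)k_{n+1}(r)\bigr)+\tfrac{n}{r}\bigl(i_n(r)k_n(\rho)-i_n(\rho)k_n(r)\bigr)$. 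Hence
\[
\frac{Q_n'(r)}{Q_n(r)}-\frac{n}{r}=\frac{i_{n+1}(r)k_n(\rho)+i_n(\rho)k_{n+1}(r)}{i_n(r)k_n(\rho)-i_n(\rho)k_n(r)}=:\Phi_n(r).
\]
Positivity of $\Phi_n(r)$ for $\rho<r\le R$ is then immediate: the numerator is a sum of positive terms by \eqref{B-10} (indeed $i_n,k_n>0$), and the denominator equals $Q_n(r)$ times the positive denominator of \eqref{Qn}, hence is positive by \eqref{Qn-1} in Remark \ref{rem-4.1}.

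The monotonicity in $n$ is the substantive part. I would proceed by showing $\Phi_{n+1}(r)<\Phi_n(r)$ for each fixed $r\in(\rho,R]$. Write $N_n:=i_{n+1}(r)k_n(\rho)+i_n(\rho)k_{n+1}(r)$ and $D_n:=i_n(r)k_n(\rho)-i_n(\rho)k_n(r)$, so $\Phi_n=N_n/D_n$ with $N_n>0$, $D_n>0$. The inequality $\Phi_{n+1}<\Phi_n$ is equivalent to $N_{n+1}D_n-N_nD_{n+1}<0$. Expanding this bilinear expression in the four products $i(r)k(\rho)$, $i(r)k(r)$ type terms and collecting, one gets a combination of the cross-products $i_{n+2}(r)i_n(\rho)$, $i_{n+1}(r)i_n(\rho)$, $k_{n+2}(r)k_n(\rho)$, etc.; the idea is to replace $i_{n+2}$, $k_{n+2}$ using the three-term recurrence \eqref{B-2} so that everything is expressed via indices $n$ and $n+1$ only, and then to recognize the resulting expression as built from the positive Wronskian quantity $i_m(s)k_{m+1}(t)+i_{m+1}(s)k_m(t)$ evaluated at the two points $\rho,r$. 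Concretely I expect $N_{n+1}D_n-N_nD_{n+1}$ to reduce to a negative multiple of a product of two such positive Wronskian-type combinations (one "in the $r$ variable" coming from the $i(r),k(r)$ pairing and one mixing $\rho$ and $r$), with the factor $1/r$ or $(2n+\text{const})/r$ produced by \eqref{B-2}. An alternative, possibly cleaner, route is to use the integral/ODE comparison already available: $F_n=Q_{n+1}-Q_n$ solves the problem displayed in the proof of Lemma \ref{lem-1}, and one can try to differentiate the identity $\Phi_n=Q_n'/Q_n-n/r$ and set up a Riccati-type differential inequality for $\Phi_n$ in $r$ with the correct boundary behaviour at $r=\rho$ (where, since $Q_n(\rho)=0$ and $Q_n'(\rho)>0$, $\Phi_n(r)\to+\infty$); monotonicity of the family of Riccati solutions in the parameter $n(n+1)$ then gives the claim. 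I would keep the Bessel-identity approach as the primary line because it is self-contained given Section 2.

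For the limit \eqref{Qn-3}, I would use the large-$n$ asymptotics \eqref{B-7}, \eqref{B-8}. From the formula $\Phi_n(R)=\bigl(i_{n+1}(R)k_n(\rho)+i_n(\rho)k_{n+1}(R)\bigr)/D_n(R)$, the asymptotics give $i_{n+1}(R)/i_n(R)\sim \tfrac{R}{2n}\to 0$ and $k_{n+1}(R)/k_n(R)\sim \tfrac{2n}{R}\to\infty$, and similarly $i_n(\rho)k_n(R)/(i_n(R)k_n(\rho))\sim(\rho/R)^{2n+1}\to 0$, so $D_n(R)=i_n(R)k_n(\rho)\bigl(1-i_n(\rho)k_n(R)/(i_n(R)k_n(\rho))\bigr)\sim i_n(R)k_n(\rho)$. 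Dividing, $\Phi_n(R)\sim i_{n+1}(R)/i_n(R)+ \bigl(i_n(\rho)/i_n(R)\bigr)\bigl(k_{n+1}(R)/k_n(\rho)\bigr)$. The first term tends to $0$; for the second, $i_n(\rho)/i_n(R)\sim(\rho/R)^{n+1/2}$ decays geometrically while $k_{n+1}(R)/k_n(\rho)$ grows only like $(\rho/R)^{\,n}$ times polynomial factors, so the product still tends to $0$. Hence $\Phi_n(R)\to 0$. (As a consistency check, this matches \eqref{Qn-2}: $\Phi_n(R)=Q_n'(R)/Q_n(R)-n/R$ and $Q_n'(R)\to1$ while $Q_n(R)\sim 1/(\tfrac nR+\beta)\to0$, so $Q_n'(R)/Q_n(R)\sim n/R$.)

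The main obstacle I anticipate is the monotonicity step: the sign of the bilinear expression $N_{n+1}D_n-N_nD_{n+1}$ is not visible termwise, and forcing it into a manifestly negative form requires a careful, somewhat lengthy manipulation of the recurrences \eqref{B-2}--\eqref{B-4} and the identity \eqref{B-9}, of exactly the flavour the authors flag in the introduction when they mention "a lengthy calculation'' and writing quantities "in order of ascending powers of $\beta$''. Everything else — positivity, the closed form for $Q_n'/Q_n-n/r$, and the limit — is routine once the Bessel toolkit of Section 2 is in hand.
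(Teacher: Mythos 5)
Your closed form $\Phi_n(r)=Q_n'(r)/Q_n(r)-n/r=\bigl(i_{n+1}(r)k_n(\rho)+i_n(\rho)k_{n+1}(r)\bigr)\big/\bigl(i_n(r)k_n(\rho)-i_n(\rho)k_n(r)\bigr)$ is exactly \eqref{Qn-4}, and your positivity argument and the treatment of the limit \eqref{Qn-3} via \eqref{B-7}--\eqref{B-8} coincide with the paper's. (For the sign of the denominator, the quickest route is monotonicity from \eqref{B-10}: $i_n$ increasing and $k_n$ decreasing with $r>\rho$ give $i_n(r)k_n(\rho)>i_n(\rho)k_n(r)$ directly, without appealing to the sign of the constant in \eqref{Qn}.)

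The genuine gap is the monotonicity in $n$, which you yourself flag as ``the main obstacle.'' You propose two routes --- a direct bilinear expansion of $N_{n+1}D_n-N_nD_{n+1}$ via \eqref{B-2}--\eqref{B-4}, \eqref{B-9}, and a Riccati-type comparison --- but complete neither; the prediction that the bilinear expression will collapse into ``a negative multiple of a product of two Wronskian-type combinations'' is not substantiated and is far from obvious. The paper sidesteps the bilinear algebra entirely. For each fixed $s\in(\rho,R]$ it introduces the auxiliary function
\[
G_n(r)=\frac rs\,Q_n(r)-\frac{Q_n(s)}{Q_{n+1}(s)}\,Q_{n+1}(r),
\]
which vanishes at $r=\rho$ and $r=s$ and, by \eqref{Qn-equ}, satisfies
\[
-\Delta G_n+\Bigl[1+\frac{(n+1)(n+2)}{r^2}\Bigr]G_n=-\frac2s\Bigl(Q_n'(r)-\frac nr\,Q_n(r)\Bigr)<0
\]
in the annulus $\rho<r<s$, the sign of the right-hand side being precisely the positivity already established by \eqref{Qn-4}. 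The maximum principle then gives $G_n<0$ in the interior and $G_n'(s)>0$; dividing the latter by $Q_n(s)>0$ rearranges exactly to $\Phi_{n+1}(s)<\Phi_n(s)$. So the idea you are missing is to construct an auxiliary elliptic boundary value problem --- the same strategy already used in Lemma \ref{lem-1} with $F_n=Q_{n+1}-Q_n$, but here with a \emph{multiplicative} comparison between $Q_n$ and $Q_{n+1}$ rather than a difference --- instead of trying to force the conclusion out of Bessel identities alone.
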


\begin{proof}
For given $s\in(\rho,R]$, from \eqref{Qn-equ}, \eqref{Qn-bou} we see that $G_n(r)$ defined by
$$
G_n(r)=\frac rsQ_n(r)-\frac{Q_n(s)}{Q_{n+1}(s)}Q_{n+1}(r),
$$
satisfies
\begin{equation*}
\begin{cases}
&-\Delta G_n+\left[1+\frac{(n+1)(n+2)}{r^2}\right]G_n=-\frac{2}{s}
\left(Q_n'(r)-\frac{n}{r}Q_n(r)\right)\quad{\rm in}~B_s\setminus\overline{B}_\rho,\\
&G_n=0\quad{\rm on}~\partial B_\rho,\quad G_n=0\quad{\rm on}~\partial B_s.
\end{cases}
\end{equation*}
A direct calculation based on \eqref{Qn}, \eqref{B-4} gives that
\begin{equation}
\label{Qn-4}
\begin{split}
&Q_n'(r)
\\
=&\frac{\frac{n}r[i_n(r)k_n(\rho)-i_n(\rho)k_n(r)]
+[i_{n+1}(r)k_n(\rho)+i_n(\rho)k_{n+1}(r)]}
{\left(\frac{n}R+\beta\right)[i_n(R)k_n(\rho)-i_n(\rho)k_n(R)]+i_n(\rho)k_{n+1}(R)
+i_{n+1}(R)k_n(\rho)}
\\
=&\frac{n}r Q_n(r)+\frac{i_{n+1}(r)k_n(\rho)+i_n(\rho)k_{n+1}(r)}
{\left(\frac{n}R+\beta\right)[i_n(R)k_n(\rho)-i_n(\rho)k_n(R)]+i_n(\rho)k_{n+1}(R)
+i_{n+1}(R)k_n(\rho)},
\end{split}
\end{equation}
which implies that
\begin{equation*}
Q_n'(r)-\frac{n}r Q_n(r)>0,\quad \rho\le r\le R.
\end{equation*}
Thus, $G_n(r)<0$ for $\rho<r<s$ and $G'_n(s)>0$ by the maximum principle.
Combining with the definition of $G_n(r)$, we further have
$$
0<\frac{Q_{n+1}'(s)}{Q_{n+1}(s)}-\frac{n+1}{s}<\frac{Q_n'(s)}{Q_n(s)}-\frac n{s},
$$
which proves the monotonicity result.

Now we turn to the proof of \eqref{Qn-3}. Using \eqref{Qn}, \eqref{Qn-4}, we compute
\begin{equation*}
\frac{Q_n'(R)}{Q_n(R)}-\frac n{R}
=\frac{i_{n+1}(R)k_n(\rho)+i_n(\rho)k_{n+1}(R)}{i_n(R)k_n(\rho)-i_n(\rho)k_n(R)}
=\frac{\frac{i_{n+1}(R)}{i_n(R)}+\frac{i_n(\rho)}{i_n(R)}\frac{k_{n+1}(R)}{k_n(\rho)}}
{1-\frac{i_n(\rho)}{i_n(R)}\frac{k_n(R)}{k_n(\rho)}}.
\end{equation*}
By \eqref{B-7}, \eqref{B-8} and the fact $0<\rho<R$, letting $n\to\infty$ in the above equality yields \eqref{Qn-3}. The proof is complete.
\end{proof}

We conclude this section by rigorously establishing the expansions \eqref{sigma}, \eqref{p} by estimating the $O(\ep^2)$ terms in the $C^{2+\alpha}$-norm. To do that, observing that $(\sigma,p)$ is defined in $\eps{\Omega}$ while $(\sigma_1,p_1)$ is defined only in $B_R$, we will first transform the domain $B_R$ into $\eps{\Omega}$ by introducing the following Hanzawa-type transformation
\begin{equation}
\label{Hanzawa}
(r,\theta,\varphi)=\eps{H}(r',\theta',\varphi')=(r'+\chi(\rho-r')\ep T(\theta',\varphi')+\chi(R-r')\ep S(\theta',\varphi'),\theta',\varphi'),
\end{equation}
where
$$
\chi\in C^{\infty},  \quad \chi(z)=\left\{
\begin{array}{c}
0,\quad{\rm if}~|z|\ge3\delta_0/4
\\
1,\quad{\rm if}~|z|<\delta_0/4
\end{array}
\right.,
\quad \left|\frac{d^k\chi}{dz^k}\right|\le\frac{C}{\delta_0^k}
$$
with $\delta_0$ being a small positive constant satisfying $\delta_0<\min\{\frac43\rho,
\frac23(R-\rho)\}$.
It is easy to see that $\eps{H}$ maps $B_R$ ($B_\rho$) into $\eps{\Omega}$ ($\eps{D}$) while keeping
the ball $\{r<\rho-\frac34\delta_0\}$ fixed, and the inverse transformation $\eps{H}^{-1}$ maps $\eps{\Omega}$ ($\eps{D}$) into $B_R$ ($B_\rho$). Let
$$
\tilde{\sigma}_1(r,\theta,\varphi)=\sigma_1(\eps{H}^{-1}(r,\theta,\varphi))\quad{\rm in}~\eps{\Omega},
$$
\begin{equation}
\label{tilde-p-1}
\tilde{p}_1(r,\theta,\varphi)=p_1(\eps{H}^{-1}(r,\theta,\varphi))\quad{\rm in}~\eps{\Omega}.
\end{equation}
Then $(\tilde{\sigma}_1,\tilde{p}_1)$ is well defined in $\eps{\Omega}$.

Next, if $S$, given by \eqref{S}, belongs to $C^{4+\alpha}(\Sigma)$, and is $\pi$-periodic in $\theta$ and $2\pi$-periodic in $\varphi$, then $T$ admits the representation \eqref{T} and satisfies
$$
\|T\|_{C(\Sigma)}\le C\|T\|_{H^{3/2}(\Sigma)}\le C\|S\|_{H^{3/2}(\Sigma)}
$$
by Lemmas \ref{lem-2.1} and \ref{lem-1}. Furthermore,
\begin{equation}
\label{T-S}
\|T\|_{C^{2+\alpha}(\Sigma)}\le C\|T\|_{C^3(\Sigma)}\le C\|S\|_{H^{9/2}(\Sigma)}\le C\|S\|_{C^{4+\alpha}(\Sigma)},
\end{equation}
provided that $1/2<\alpha<1$.

We now proceed to establish the following.

\begin{lemma}
\label{lem-sch}
Assume that $S\in C^{4+\alpha}(\Sigma)$ with $1/2<\alpha<1$ and $\|S\|_{C^{4+\alpha}(\Sigma)}\le1$, and is given by \eqref{S}, $\pi$-periodic in $\theta$ and $2\pi$-periodic in $\varphi$. Then the estimates
\begin{align}
\|\sigma-\hat{\sigma}(\rho)-\ep\tilde{\sigma}_1\|_{C^{2+\alpha}(\eps{\overline{D}})}
+\|\sigma-\hat{\sigma}-\ep\tilde{\sigma}_1\|_{C^{2+\alpha}
(\overline{\eps{\Omega}\setminus\eps{D}})}&\le C|\ep|^2\|S\|_{C^{4+\alpha}(\Sigma)},
\label{sigma-sch}
\\
\|p-\hat{p}(\rho)-\ep\tilde{p}_1\|_{C^{2+\alpha}(\eps{\overline{D}})}
+\|p-\hat{p}-\ep\tilde{p}_1\|_{C^{2+\alpha}
(\overline{\eps{\Omega}\setminus\eps{D}})}&\le C|\ep|^2\|S\|_{C^{4+\alpha}(\Sigma)}
\label{p-sch}
\end{align}
are valid uniformly for small $|\ep|$
with $C$ being independent of $\ep$ and $S$.
\end{lemma}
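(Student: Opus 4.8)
The plan is to set up the boundary-value problems satisfied by the error functions, to check that all of their data is of size $O(|\ep|^2\|S\|_{C^{4+\alpha}(\Sigma)})$, and then to conclude from Schauder estimates for diffraction problems that are uniform in $\ep$.

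First observe two simplifications. By the maximum principle $\sigma\equiv\underline\sigma=\hat\sigma(\rho)$ in $\eps D$, while $\sigma_1\equiv0$ in $B_\rho$ (it is harmonic there and vanishes on $\partial B_\rho$), hence $\tilde\sigma_1\equiv0$ in $\eps D$ and $\sigma-\hat\sigma(\rho)-\ep\tilde\sigma_1\equiv0$ in $\eps D$, so the first term on the left of \eqref{sigma-sch} is identically zero. As a preliminary step I would apply the interior and boundary Schauder estimates for the single elliptic equations \eqref{sigma-1-2}, \eqref{p-1-2} away from $\partial B_\rho$, together with the Agmon--Douglis--Nirenberg boundary Schauder estimates \cite{ADN} near $\partial B_\rho$ (where $(p_1^+,p_1^-)$ is regarded as an elliptic system with the transmission relations \eqref{p-1-3} as complementing boundary conditions), and combine these with \eqref{T-S}, to obtain, the $C^{2+\alpha}$-norms of $\sigma_1$, $p_1$ being understood on each of $\overline{B_\rho}$ and $\overline{B_R\setminus B_\rho}$,
$$
\|\sigma_1\|_{C^{2+\alpha}}+\|p_1\|_{C^{2+\alpha}}+\|T\|_{C^{2+\alpha}(\Sigma)}\le C\|S\|_{C^{4+\alpha}(\Sigma)}.
$$

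Next I would introduce the errors $u=\sigma-\hat\sigma-\ep\tilde\sigma_1$ in $\eps\Omega\setminus\eps D$ and $v=p-\hat p-\ep\tilde p_1$ in $\eps\Omega\setminus\eps D$, $v=p-\hat p(\rho)-\ep\tilde p_1$ in $\eps D$, and compute the problems they solve. Writing the Laplacian in the coordinates of the Hanzawa map \eqref{Hanzawa} gives $\Delta\tilde\sigma_1=(\Delta\sigma_1)\circ\eps H^{-1}+\ep\mathcal L_\ep\sigma_1$ and $\Delta\tilde p_1=(\Delta p_1)\circ\eps H^{-1}+\ep\mathcal L_\ep p_1$ on each region, where $\mathcal L_\ep$ is a second-order operator whose coefficients are bounded in $C^\alpha$ by a constant times $\|S\|_{C^{2+\alpha}(\Sigma)}+\|T\|_{C^{2+\alpha}(\Sigma)}$. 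Since $\Delta\sigma_1=\sigma_1$, $-\Delta p_1=\mu\sigma_1$ in $B_R\setminus\overline B_\rho$ and both are harmonic in $B_\rho$, and $\tilde\sigma_1=\sigma_1\circ\eps H^{-1}$, the leading and the $O(\ep)$ contributions cancel by construction, leaving
\begin{gather*}
\Delta u-u=-\ep^2\mathcal L_\ep\sigma_1\ \text{in}\ \eps\Omega\setminus\eps D,\\
-\Delta v=\mu u-\ep^2\mathcal L_\ep p_1\ \text{in}\ \eps\Omega\setminus\eps D,\qquad -\Delta v=-\ep^2\mathcal L_\ep p_1\ \text{in}\ \eps D,
\end{gather*}
whose right-hand sides are bounded in $C^\alpha$ by $C|\ep|^2\|S\|_{C^{4+\alpha}(\Sigma)}$ thanks to the previous display (the $v$-equation being handled after $u$ is estimated). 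For the boundary and transmission conditions I would Taylor-expand the radial functions $\hat\sigma$, $\hat p$ about $r=R$ and $r=\rho$, insert the expansions ${\bf n}=\vec{e}_r-\frac{\ep}R\nabla_\omega S+\ep^2 f_1$ and $\kappa=\frac1R-\frac{\ep}{R^2}(S+\frac12\Delta_\omega S)+\ep^2 f_2$ with $f_1$, $f_2$ uniformly bounded in the relevant Hölder norms once $\|S\|_{C^{4+\alpha}(\Sigma)}\le1$, and use the identities $\sigma_s'(\rho)=\hat p'(\rho)=p_s'(R)=0$, $\sigma_s'(R)+\beta(\sigma_s(R)-1)=0$, $\sigma_s''(\rho)=\underline\sigma$, $p_s''(\rho)=\mu(\tilde\sigma-\underline\sigma)$ together with the linearized conditions \eqref{sigma-1-3}, \eqref{sigma-1-4}, \eqref{p-1-3}, \eqref{p-1-4}. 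All $O(1)$ and $O(\ep)$ terms cancel, giving
\begin{gather*}
\partial_{\bf n}u+\beta u=O(\ep^2)\ \text{on}\ \partial\eps\Omega,\qquad u=O(\ep^2)\ \text{on}\ \partial\eps D,\\
v=O(\ep^2)\ \text{on}\ \partial\eps\Omega,\qquad [v]=O(\ep^2),\quad [\partial_{\bf n}v]=O(\ep^2)\ \text{on}\ \partial\eps D,
\end{gather*}
each $O(\ep^2)$ quantity being bounded in the appropriate Hölder norm by $C|\ep|^2\|S\|_{C^{4+\alpha}(\Sigma)}$.

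Finally I would close the estimate by Schauder theory on the $\ep$-dependent domains. Since $\|S\|_{C^{4+\alpha}(\Sigma)}\le1$ and $\|T\|_{C^{2+\alpha}(\Sigma)}\le C\|S\|_{C^{4+\alpha}(\Sigma)}$, the surfaces $\partial\eps\Omega$, $\partial\eps D$ stay in a fixed $C^{2+\alpha}$-neighbourhood of $\partial B_R$, $\partial B_\rho$ for all small $|\ep|$, so all Schauder constants below may be taken independent of $\ep$ and $S$. Away from $\partial\eps D$ the function $u$ (resp.\ $v$) solves a single elliptic equation with a Robin or Dirichlet condition, to which the classical interior and boundary Schauder estimates apply; near $\partial\eps D$, after local flattening, $(v^+,v^-)$ is governed by an elliptic system for which the transmission data are complementing boundary conditions, and the ADN boundary Schauder estimates \cite{ADN} apply. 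Patching these over a finite cover and using the $O(\ep^2)$ bounds on the data yields first $\|u\|_{C^{2+\alpha}(\overline{\eps\Omega\setminus\eps D})}\le C|\ep|^2\|S\|_{C^{4+\alpha}(\Sigma)}$, and then, feeding $\mu u$ into the equation for $v$, the estimate \eqref{p-sch}; combined with the vanishing of the necrotic part of the $\sigma$-error noted above, \eqref{sigma-sch} follows as well. I expect the main obstacle to be twofold: verifying that the condition $[\partial_{\bf n}p]=0$ on the \emph{moving} inner interface produces only an $O(\ep^2)$ defect --- which rests on the exact cancellation of the $O(\ep)$ jump $\ep\,p_s''(\rho)T=\ep\mu(\tilde\sigma-\underline\sigma)T$ of the normal derivative of the base state against $\ep[\partial_{\bf n}p_1]=-\ep\mu(\tilde\sigma-\underline\sigma)T$, while carefully tracking the Hanzawa-induced perturbations of the unit normal and of the Jacobian on $\partial\eps D$ --- and constructing the ADN system-type Schauder estimates near the two free boundaries with constants uniform in $\ep$.
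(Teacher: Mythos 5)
Your proposal follows essentially the same route as the paper: pull $\sigma_1$, $p_1$ back to $\eps\Omega$ via the Hanzawa map, set up diffraction (transmission) problems for the error functions, show that the interior residuals coming from the transformed Laplacian and the boundary residuals coming from Taylor-expanding $\hat\sigma$, $\hat p$, $\bf n$, $\kappa$ are all $O(|\ep|^2\|S\|_{C^{4+\alpha}(\Sigma)})$ after the $O(1)$ and $O(\ep)$ contributions cancel against the linearized conditions, and then close via ADN-type Schauder estimates near $\partial\eps D$ (together with standard Schauder estimates elsewhere) and the bound $\|T\|_{C^{2+\alpha}(\Sigma)}\le C\|S\|_{C^{4+\alpha}(\Sigma)}$. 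You also correctly note that the error in the necrotic part of \eqref{sigma-sch} vanishes identically and that the crucial cancellation on $\partial\eps D$ is the one between $\ep\,\hat p''(\rho)T=\ep\mu(\tilde\sigma-\underline\sigma)T$ and $\ep[\partial_{\bf n}p_1]=-\ep\mu(\tilde\sigma-\underline\sigma)T$, which is precisely how the paper proceeds.
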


\begin{proof}
By \eqref{sigma-1-1}-\eqref{sigma-1-4}, we see that $\sigma_1\equiv0$ in $B_\rho$.  Applying the Schauder estimates, we obtain
\begin{equation*}\label{sigma-1-sch}
\|\sigma_1\|_{C^{2+\alpha}(\overline{B_R\setminus B_\rho})}\le C\|S\|_{C^{1+\alpha}(\Sigma)}.
\end{equation*}
Moreover, we derive for $\tilde{\sigma}_1$ the problem
\begin{align*}
&\tilde{\sigma}_1\equiv0\quad{\rm in}~\eps{D},
\\
&-\Delta\tilde{\sigma}_1+\tilde{\sigma}_1=\tilde{f}_1\quad{\rm in}~
\eps{\Omega}\setminus\eps{D},
\\
&\tilde{\sigma}_1=0\quad{\rm on}~\partial\eps{D},
\\
&\frac{\partial\tilde{\sigma}_1}{\partial{\bf n}}+\beta\tilde{\sigma}_1=-\lambda S+\tilde{g}_1\quad{\rm on}~\partial\eps{\Omega},
\end{align*}
where $\tilde{f}_1/\ep$ involves at most second order derivatives of $T$, $S$ and $\sigma_1$, $\tilde{g}_1/\ep$ involves at most first order derivatives of $S$ and $\sigma_1$, and
\begin{align}
\|\tilde{f}_1\|_{C^\alpha(\overline{\eps{\Omega}\setminus\eps{D}})}&\le C|\ep|(\|T\|_{C^{2+\alpha}(\Sigma)}+\|S\|_{C^{2+\alpha}(\Sigma)}),\label{f}
\\
\|\tilde{g}_1\|_{C^{1+\alpha}(\Sigma)}&\le C|\ep|\|S\|_{C^{2+\alpha}(\Sigma)}.\label{g}
\end{align}

Set
\begin{equation*}
\psi=
\begin{cases}
\sigma-\hat{\sigma}-\ep\tilde{\sigma}_1
&\quad{\rm in}~\eps{\Omega}\setminus\eps{D},
\\
\sigma-\hat{\sigma}(\rho)-\ep\tilde{\sigma}_1
&\quad{\rm in}~\eps{D}.
\end{cases}
\end{equation*}
Then, we find
\begin{align*}
&\psi\equiv0\quad{\rm in}~\eps{D},
\\
&-\Delta\psi+\psi=-\ep\tilde{f}_1\quad{\rm in}~\eps{\Omega}\setminus\eps{D},
\\
&\psi^+=\hat{\sigma}(\rho)-\hat{\sigma}(\rho+\ep T)\quad
{\rm on}~\partial\eps{D},
\\
&\frac{\partial\psi}{\partial\bf n}+\beta\psi=-\ep\tilde{g}_1+\tilde{g}_2
\quad{\rm on}~\partial\eps{\Omega},
\end{align*}
where
\begin{equation}
\label{inner-bound}
\|\hat{\sigma}(\rho)-\hat{\sigma}(\rho+\ep T)\|_{C^{2+\alpha}(\Sigma)}\le C|\ep|^2\|T\|_{C^{2+\alpha}(\Sigma)},
\end{equation}
and similarly to $\tilde{k}_1$ in \citep[Lemma 3.1]{HZH(17)},
\begin{equation}
\label{g-2}
\|\tilde{g}_2\|_{C^{1+\alpha}(\Sigma)}\le C|\ep|^2\|S\|_{C^{2+\alpha}(\Sigma)}.
\end{equation}
Combining \eqref{f}-\eqref{g-2}, we get
\begin{equation}\label{psi}
\|\psi\|_{C^{2+\alpha}(\overline{\eps{\Omega}\setminus\eps{D}})}\le C|\ep|^2(\|T\|_{C^{2+\alpha}(\Sigma)}+\|S\|_{C^{2+\alpha}(\Sigma)}),
\end{equation}
which together with \eqref{T-S} proves \eqref{sigma-sch}.

We now proceed with the proof of \eqref{p-sch}.
It should be pointed out that, being different from that for $\sigma$ (which equals to a constant in the dead-core region), here we need establish Schauder estimates for diffraction problems by using the estimates near the boundary for elliptic systems \cite{ADN}.
Precisely, for the problem \eqref{p-1-1}-\eqref{p-1-4} there holds
\begin{equation}\label{p-1-sch}
\|p_1\|_{C^{2+\alpha}(\overline{B}_\rho)}+\|p_1\|_{C^{2+\alpha}(\overline{B_R\setminus B_\rho})}\le C(\|T\|_{C^{2+\alpha}(\Sigma)}+\|S\|_{C^{4+\alpha}(\Sigma)}).
\end{equation}
Denote
\begin{equation*}
\label{phi}
\phi=
\begin{cases}
p-\hat{p}-\ep\tilde{p}_1&\quad{\rm in}~\eps{\Omega}\setminus\eps{D},
\\
p-\hat{p}(\rho)-\ep\tilde{p}_1&\quad{\rm in}~\eps{D}.
\end{cases}
\end{equation*}
By a similar procedure as before, we obtain
\begin{align*}
&-\Delta\phi=-\ep\tilde{f}_2\quad{\rm in}~\eps{D},
\\
&-\Delta\phi=\mu(\sigma-\hat{\sigma}-\ep\tilde{\sigma}_1)-\ep\tilde{f}_3
\quad{\rm in}~\eps{\Omega}\setminus\eps{D},
\\
&[\phi]=\hat{p}(\rho)-\hat{p}(\rho+\ep T),\quad
[\partial_{\bf n}\phi]=\tilde{g}_3\quad{\rm on}~\partial\eps{D},
\\
&\phi=\tilde{g}_4\quad{\rm on}~\partial\eps{\Omega},
\end{align*}
where
\begin{align*}
\|\tilde{f}_2\|_{C^\alpha(\eps{\overline{D}})}
+\|\tilde{f}_3\|_{C^\alpha(\overline{\eps{\Omega}\setminus\eps{D}})}&\le C|\ep|(\|T\|_{C^{2+\alpha}(\Sigma)}+\|S\|_{C^{4+\alpha}(\Sigma)}),
\\
\|\hat{p}(\rho)-\hat{p}(\rho+\ep T)\|_{C^{2+\alpha}(\Sigma)}&\le C|\ep|^2\|T\|_{C^{2+\alpha}(\Sigma)},
\\
\|\tilde{g}_3\|_{C^{1+\alpha}(\Sigma)}&\le C|\ep|^2\|T\|_{C^{2+\alpha}(\Sigma)},
\\
\|\tilde{g}_4\|_{C^{2+\alpha}(\Sigma)}&\le C|\ep|^2\|S\|_{C^{4+\alpha}(\Sigma)}.
\end{align*}
Using \eqref{psi}, the Schauder estimates for diffraction problems yield
\begin{equation*}
\|\phi\|_{C^{2+\alpha}(\eps{\overline{D}})}+\|\phi\|_{C^{2+\alpha}(\overline{\eps{\Omega}\setminus\eps{D}})}\le C|\ep|^2(\|T\|_{C^{2+\alpha}(\Sigma)}+\|S\|_{C^{4+\alpha}(\Sigma)}),
\end{equation*}
and then \eqref{p-sch} follows immediately from \eqref{T-S}. The proof is complete.
\end{proof}

\section{Symmetry-breaking solutions}

In this section, regarding \eqref{eq(1.1)}-\eqref{eq(1.6)} as a bifurcation problem with a bifurcation parameter $\mu$ (see \eqref{F}), we prove the existence of symmetry-breaking solutions by using the Crandall-Rabinowitz theorem.

By \eqref{T-S} and \eqref{p-1-sch},
$$
\frac{\partial(\hat{p}+\ep\tilde{p}_1)}{\partial{\bf n}}\bigg|_{\partial\eps{\Omega}}
=\ep\left[\frac{\partial^2\hat{p}}{\partial r^2}\bigg|_{\partial B_R}S
+\frac{\partial p_1}{\partial r}\bigg|_{\partial B_R}\right]+O(|\ep|^2\|S\|_{C^{4+\alpha}(\Sigma)}),
$$
which together with \eqref{hatp} and \eqref{p-sch} leads to
\begin{equation}\label{F-sch}
F(\tilde{R},\mu)=\ep\left[\frac{\partial^2 p_s}{\partial r^2}\bigg|_{\partial B_R}S
+\frac{\partial p_1}{\partial r}\bigg|_{\partial B_R}\right]+O(|\ep|^2\|S\|_{C^{4+\alpha}(\Sigma)}).
\end{equation}

As in \cite{Fon,HZH(17)}, we introduce the Banach spaces
$$
X^{l+\alpha}=\{\tilde{R}\in C^{l+\alpha}(\Sigma),\tilde{R}~{\rm is}~\pi{\rm-periodic~in}
~\theta,2\pi{\rm -periodic~in}~\varphi\},
$$
$$
X_2^{l+\alpha}={\rm closure~of~the~linear~space~spanned~by}~\{Y_{n,0}(\theta),n=0,2,4,\dots\}~{\rm in}~X^{l+\alpha}.
$$
Take $X=X_2^{4+\alpha}$ and $Y=X_2^{1+\alpha}$ with $1/2<\alpha<1$. Noticing that $Y_{n,0}(\pi-\theta)=Y_{n,0}(\theta)$ if and only if
$n$ is even, $X_2^{l+\alpha}$ coincides with the subspace of the $C^{l+\alpha}(\Sigma)$-closure of the smooth functions consisting of those functions
$u$ that are independent of $\varphi$ and
satisfy $u(\theta) =u(\pi-\theta)$. Thus, $F$ maps $X$ into $Y$. The relation \eqref{F-sch} shows that the mapping $(\tilde{R},\mu)\to F(\tilde{R},\mu)$ from $X_2^{l+3+\alpha}$
to $X_2^{l+\alpha}$ is bounded if $l=1$, and the same argument shows that the same is
true for any $l\ge1$. A similar argument shows that this mapping is Fr\'echet differentiable in $(\tilde{R},\mu)$; furthermore $\partial F(\tilde{R},\mu)/\partial\tilde{R}$ (or $\partial F(\tilde{R},\mu)/\partial\mu$) is obtained
by solving a linearized problem about $(\tilde{R},\mu)$ with respect to $\tilde{R}$ (or $\mu$). By
using the Schauder estimates we can then further obtain differentiability of
$F(\tilde{R},\mu)$ to any order.

In view of \eqref{F-sch}, the Fr\'echet derivative of $F(\tilde R,\mu)$
in $\tilde R$ at $(0,\mu)$ is given as follows:
\begin{equation}
\label{eq(4.2)}
[F_{\tilde{R}}(0,\mu)]S=\frac{\partial^2 p_s}{\partial r^2}\bigg|_{\partial B_R}S
+\frac{\partial p_1}{\partial r}\bigg|_{\partial B_R}.
\end{equation}
Using \eqref{eq(2.4)}, \eqref{eq(2.5)}, we have
\begin{equation}
\label{eq(4.3)}
\frac{\partial^2 p_s}{\partial r^2}\bigg|_{\partial B_R}=-\mu(\sigma_s(R)-\tilde\sigma),
\end{equation}
and by \eqref{p1}, \eqref{Pn+},
\begin{equation}
\label{eq(4.4)}
\begin{split}
\frac{\partial p_1}{\partial r}\bigg|_{\partial B_R}
=&\sum_{n=0}^\infty\sum_{m=-n}^n a_{n,m}
\bigg[\frac n{R^3}\left(\frac{n(n+1)}2-1\right)+\lambda\mu\bigg(Q'_n(R)-\frac{n}RQ_n(R)\bigg)
\\
&\quad\quad\quad\quad\quad\quad\quad
-\lambda\mu\frac{\tilde\sigma}{\underline\sigma}\frac{\rho^{n+2}}{R^{n+2}}Q_n'(\rho)\bigg]Y_{n,m}(\theta,\varphi).
\end{split}
\end{equation}
Substituting \eqref{eq(4.3)} and \eqref{eq(4.4)} into \eqref{eq(4.2)}, we arrive at
\begin{equation*}
\begin{aligned}
&[F_{\tilde{R}}(0,\mu)]S=\sum_{n=0}^\infty\sum_{m=-n}^n a_{n,m}
\bigg[\frac n{R^3}\left(\frac{n(n+1)}2-1\right)
+\lambda\mu\bigg(Q'_n(R)-\frac{n}RQ_n(R)
\bigg)
\\
&\quad\quad\quad\quad\quad\quad\quad\quad\quad\quad\quad\quad\quad
-\lambda\mu\frac{\tilde\sigma}{\underline\sigma}\frac{\rho^{n+2}}{R^{n+2}}Q_n'(\rho)
-\mu(\sigma_s(R)-\tilde\sigma)\bigg]Y_{n,m}(\theta,\varphi).
\end{aligned}
\end{equation*}
In particular,
\begin{equation*}
[F_{\tilde{R}}(0,\mu)]Y_{n,m}=(A_n-\mu B_n)Y_{n,m},
\end{equation*}
where
\begin{equation}
\label{eq(4.22)}
A_n=\frac n{R^3}\left(\frac{n(n+1)}2-1\right),
\end{equation}
\begin{equation}
\label{eq(4.6)}
B_n=\sigma_s(R)-\tilde\sigma
-\lambda\left[\left(Q'_n(R)-\frac{n}RQ_n(R)\right)-
\frac{\tilde\sigma}{\underline\sigma}\frac{\rho^{n+2}}{R^{n+2}}Q_n'(\rho)\right] .
\end{equation}
Obviously,
$$
A_0=A_1=0,\quad A_n>0\quad{\rm for}~n\ge2.
$$
As will be shown in the next lemma, $B_0<0$ and $B_1=0$.

\begin{lemma}
\label{lem-5.2}
$B_0<0$ and $B_1=0$.
\end{lemma}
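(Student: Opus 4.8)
The two claims call for quite different arguments. For $B_1=0$ I would exploit an exact identification: the function $w:=\sigma_s'$ solves the $n=1$ case of \eqref{Qn-equ}, because differentiating $\sigma_s''+\frac2r\sigma_s'-\sigma_s=0$ from \eqref{eq(2.1)} gives $w''+\frac2rw'-\big(1+\tfrac{2}{r^2}\big)w=0$ on $(\rho,R)$, and $w(\rho)=\sigma_s'(\rho)=0$. Since the solutions of \eqref{Qn-equ} with $n=1$ subject to the single condition $Q(\rho)=0$ form a one-dimensional space containing $Q_1$, there is a constant $c$ with $\sigma_s'\equiv cQ_1$ on $[\rho,R]$; evaluating $w'(R)+\beta w(R)$ and invoking the boundary condition in \eqref{Qn-bou} together with the definition $\lambda=(\sigma_s''+\beta\sigma_s')\big|_{\partial B_R}$ forces $c=\lambda$. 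Hence $\lambda Q_1(R)=\sigma_s'(R)$, $\lambda Q_1'(R)=\lambda-\beta\sigma_s'(R)$ (using $Q_1'(R)+\beta Q_1(R)=1$), and $\lambda Q_1'(\rho)=\sigma_s''(\rho)=\underline\sigma$ (using \eqref{eq(2.1)} at $r=\rho$). Substituting these, together with $\lambda=\sigma_s(R)-\frac2R\sigma_s'(R)+\beta\sigma_s'(R)$ (from the definition of $\lambda$ and \eqref{eq(2.1)} at $r=R$), into \eqref{eq(4.6)} with $n=1$, all the Bessel quantities cancel and one is left with $B_1=-\tilde\sigma\big(1-\rho^3/R^3\big)+\frac3R\sigma_s'(R)$, which vanishes by the relation \eqref{eq(2.19)} that defines $\tilde\sigma$. (As a byproduct $\lambda=\sigma_s'(R)/Q_1(R)>0$.)

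For $B_0<0$ I would compute directly. Taking $n=0$ in \eqref{Qn} and \eqref{Qn-4}, and using $i_0'=i_1$, $k_0'=-k_1$ from \eqref{B-4}, one gets
\begin{equation*}
Q_0'(r)=\frac{i_1(r)k_0(\rho)+i_0(\rho)k_1(r)}{D_0},
\end{equation*}
where $D_0>0$ is the denominator in \eqref{Qn} with $n=0$; in particular $Q_0'(\rho)=\pi/(2\rho^2D_0)$ by \eqref{B-9}. Substituting this, the expression \eqref{lambda} for $\lambda$, the values of $\sigma_s(R)$ and $\sigma_s'(R)$ obtained from \eqref{eq(2.2)}, and the relation \eqref{eq(2.19)} for $\tilde\sigma$ into \eqref{eq(4.6)} with $n=0$, then inserting the closed forms \eqref{B-5}, \eqref{B-6} of the half-integer Bessel functions (equivalently, collapsing products of $i$'s and $k$'s via \eqref{i,k-1}--\eqref{i,k-3}), and finally multiplying through by the positive number $D_0$ and clearing the remaining positive denominators, turns $B_0<0$ into the claim that an explicit function $N(\rho,R,\beta)$ --- built from $\sinh(R-\rho)$, $\cosh(R-\rho)$ and polynomials in $\rho$, $R$ --- is strictly negative whenever $0<\rho<R$ and $\beta>0$.

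To establish $N<0$ I would adopt the device of \cite{HZH(17)}: write $N$ as a polynomial in $\beta$ with coefficients $N_j(\rho,R)$, and show $N_j(\rho,R)<0$ for each $j$ on $\{0<\rho<R\}$. This is where the actual difficulty lies. Each $N_j$ is a combination of hyperbolic and polynomial terms which is not negative term by term, so it must be regrouped and factored before the sharp inequalities $\cosh z>\frac{\sinh z}{z}>1$ for $z>0$ (cf.\ \eqref{sq-1}), their higher-order refinements, and Taylor's formula with Lagrange remainder --- as used in Theorem~\ref{thm-1} to prove \eqref{eq(2.18)} and \eqref{eq(2.20)} --- can be brought to bear, with $0<\rho<R$ exploited throughout. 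The reduction to $N$ is routine bookkeeping; checking $N_j<0$ for every $j$ is the lengthy computation and the main obstacle.
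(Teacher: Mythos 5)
Your proof that $B_1=0$ is correct, and it is a genuinely different argument from the paper's. The paper proceeds by brute force: via \eqref{eq(4.10)}--\eqref{eq(4.12)} it reduces the claim to $M_{1,1}=M_{2,1}=0$ and verifies these by expanding the Bessel expressions and invoking the recurrence $i_2(s)=i_0(s)-\frac3s i_1(s)$, $k_2(s)=k_0(s)+\frac3s k_1(s)$ from \eqref{B-2}. Your structural observation --- that $\sigma_s'$ solves the $n=1$ case of \eqref{Qn-equ} with $\sigma_s'(\rho)=0$, hence $\sigma_s'=\lambda Q_1$ by one-dimensionality and the normalization $Q_1'(R)+\beta Q_1(R)=1$ in \eqref{Qn-bou} --- short-circuits all of that: the resulting relations $\lambda Q_1(R)=\sigma_s'(R)$, $\lambda Q_1'(R)=\lambda-\beta\sigma_s'(R)$, $\lambda Q_1'(\rho)=\sigma_s''(\rho)=\underline\sigma$, together with $\lambda=\sigma_s(R)-\frac2R\sigma_s'(R)+\beta\sigma_s'(R)$ from \eqref{eq(2.1)}, collapse \eqref{eq(4.6)} to $-\tilde\sigma\bigl(1-\rho^3/R^3\bigr)+\frac3R\sigma_s'(R)$, which vanishes by \eqref{eq(2.19)}. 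I checked the arithmetic and it is right. This route is cleaner, avoids touching the Bessel recurrences entirely, and as a bonus yields $\lambda=\sigma_s'(R)/Q_1(R)>0$, a sign fact not transparent from \eqref{lambda}.

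For $B_0<0$, however, your proposal stops exactly where the paper's proof begins to do its work. Decomposing the numerator as a polynomial in $\beta$ with $(\rho,R)$-dependent coefficients is precisely the paper's step $W_0=M_{1,0}+M_{2,0}\beta$, and the task becomes showing $M_{1,0}<0$ and $M_{2,0}<0$. The paper handles $M_{1,0}$ by writing it as a positive constant times $\xi(\rho)$, where $\xi(R)=0$ and $\xi'(s)>0$ on $(0,R)$ follow from \eqref{i,k-2}, \eqref{i,k-1} and \eqref{sq-1}; for $M_{2,0}$ it passes to an explicit hyperbolic-polynomial function $\eta(s)$ with $M_{2,0}$ a positive multiple of $\eta(\rho)$, and proves $\eta(\rho)<0$ through a nested chain of auxiliary functions $\eta\to\zeta\to w\to u\to v$, repeatedly differentiating and exploiting vanishing data at $s=R$ until positivity of polynomial coefficients takes over. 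That chain is the substantial part of the argument and is not routine; no single application of \eqref{sq-1} or a one-shot Taylor bound suffices, which is exactly why the paper needs five stages. You explicitly defer this as ``the lengthy computation and the main obstacle.'' As written, then, your argument establishes $B_1=0$ by a nicer method, but for $B_0<0$ it is a plan --- the same plan the paper executes --- rather than a proof.
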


\begin{proof}
Combining \eqref{eq(2.2)}, \eqref{eq(2.19)}, \eqref{lambda} and \eqref{eq(4.6)}, we derive
\begin{equation}
\label{eq(4.10)}
B_n=\frac2{\pi}\underline\sigma\rho^2H_n
\end{equation}
with
\begin{equation}
\label{eq(4.11)}
\begin{aligned}
H_n=&
i_0(R)k_1(\rho)+i_1(\rho)k_0(R)
-\frac{3R^2}{R^3-\rho^3}(i_1(R)k_1(\rho)-i_1(\rho)k_1(R))
\\
&-\left[i_0(R)k_1(\rho)+i_1(\rho)k_0(R)+\left(\beta-\frac2R\right)
(i_1(R)k_1(\rho)-i_1(\rho)k_1(R))\right]
\\
&\quad
\left[\left(Q'_n(R)-\frac{n}R Q_n(R)\right)-\frac{\tilde\sigma}{\underline\sigma}
\frac{\rho^{n+2}}{R^{n+2}}\left(Q_n'(\rho)-\frac{n}\rho Q_n(\rho)\right)\right],
\end{aligned}
\end{equation}
where we have employed the fact that $Q_n(\rho)=0$. Besides, \eqref{eq(2.19)} also implies
\begin{equation}
\label{eq(4.14)}
\frac{\tilde\sigma}{\underline\sigma}=\frac{3R^2}{R^3-\rho^3}
\frac2{\pi}\rho^2(i_1(R)k_1(\rho)-i_1(\rho)k_1(R)),
\end{equation}
and \eqref{Qn-4} together with \eqref{B-9} gives
\begin{equation}\label{Q-R}
\begin{split}
&Q'_n(R)-\frac{n}R Q_n(R)
\\
=&
\frac{i_{n+1}(R)k_n(\rho)+i_n(\rho)k_{n+1}(R)}
{\left(\frac{n}R+\beta\right)[i_n(R)k_n(\rho)-i_n(\rho)k_n(R)]+i_n(\rho)k_{n+1}(R)
+i_{n+1}(R)k_n(\rho)},
\end{split}
\end{equation}
\begin{equation}\label{Q-rho}
\begin{split}
&Q'_n(\rho)-\frac{n}\rho Q_n(\rho)
\\
=&
\frac{i_{n+1}(\rho)k_n(\rho)+i_n(\rho)k_{n+1}(\rho)}
{\left(\frac{n}R+\beta\right)[i_n(R)k_n(\rho)-i_n(\rho)k_n(R)]+i_n(\rho)k_{n+1}(R)
+i_{n+1}(R)k_n(\rho)}
\\
=&
\frac{\pi}{2\rho^2}\frac1
{\left(\frac{n}R+\beta\right)[i_n(R)k_n(\rho)-i_n(\rho)k_n(R)]+i_n(\rho)k_{n+1}(R)
+i_{n+1}(R)k_n(\rho)}.
\end{split}
\end{equation}
Thus, substituting \eqref{eq(4.14)}, \eqref{Q-R} and \eqref{Q-rho} into \eqref{eq(4.11)}, we get
\begin{equation}
\label{eq(4.18)}
H_n=\frac1{\left(\frac{n}R+\beta\right)[i_n(R)k_n(\rho)-i_n(\rho)k_n(R)]+i_n(\rho)k_{n+1}(R)
+i_{n+1}(R)k_n(\rho)}W_n,
\end{equation}
where
\begin{equation}
\label{eq(4.12)}
\begin{aligned}
W_n
=&\left[\left(\frac{n}R+\beta\right)(i_n(R)k_n(\rho)-i_n(\rho)k_n(R))+(i_n(\rho)k_{n+1}(R)
+i_{n+1}(R)k_n(\rho))\right]
\\
&\left[(i_0(R)k_1(\rho)+i_1(\rho)k_0(R))
-\frac{3R^2}{R^3-\rho^3}(i_1(R)k_1(\rho)-i_1(\rho)k_1(R))\right]
\\
&-\left[(i_0(R)k_1(\rho)+i_1(\rho)k_0(R))+\left(\beta-\frac2R\right)
(i_1(R)k_1(\rho)-i_1(\rho)k_1(R))\right]
\\
&\left[(i_n(\rho)k_{n+1}(R)+i_{n+1}(R)k_n(\rho))
-\frac{3R^2}{R^3-\rho^3}\frac{\rho^{n+2}}{R^{n+2}}(i_1(R)k_1(\rho)-i_1(\rho)k_1(R))\right]
\\
=&M_{1,n}+M_{2,n}\beta
\end{aligned}
\end{equation}
with
\begin{equation}
\label{eq(4.16)}
\begin{split}
M_{1,n}=&\frac nR(i_n(R)k_n(\rho)-i_n(\rho)k_n(R))
\\
&\quad\left[(i_0(R)k_1(\rho)+i_1(\rho)k_0(R))
-\frac{3R^2}{R^3-\rho^3}(i_1(R)k_1(\rho)-i_1(\rho)k_1(R))\right]
\\
&+\frac{i_1(R)k_1(\rho)-i_1(\rho)k_1(R)}{R(R^3-\rho^3)}
\bigg[3R^3\frac{\rho^{n+2}}{R^{n+2}}(i_0(R)k_1(\rho)+i_1(\rho)k_0(R))
\\
&\quad\quad\quad\quad\quad\quad\quad\quad\quad\quad\quad\quad
-6R^2\frac{\rho^{n+2}}{R^{n+2}}(i_1(R)k_1(\rho)-i_1(\rho)k_1(R))
\\
&\quad\quad\quad\quad\quad\quad\quad\quad\quad\quad\quad\quad
-(R^3+2\rho^3)(i_n(\rho)k_{n+1}(R)+i_{n+1}(R)k_n(\rho))\bigg],
\end{split}
\end{equation}
\begin{equation}
\label{eq(4.17)}
\begin{aligned}
M_{2,n}=&(i_n(R)k_n(\rho)-i_n(\rho)k_n(R))
\\
&\left[(i_0(R)k_1(\rho)+i_1(\rho)k_0(R))
-\frac{3R^2}{R^3-\rho^3}(i_1(R)k_1(\rho)-i_1(\rho)k_1(R))\right]
\\
&-(i_1(R)k_1(\rho)-i_1(\rho)k_1(R))
\\
&\left[(i_n(\rho)k_{n+1}(R)+i_{n+1}(R)k_n(\rho))
-\frac{3R^2}{R^3-\rho^3}\frac{\rho^{n+2}}{R^{n+2}}(i_1(R)k_1(\rho)-i_1(\rho)k_1(R))\right].
\end{aligned}
\end{equation}

In the sequel, we shall first prove that $B_1=0$. By \eqref{eq(4.10)}, \eqref{eq(4.18)} and \eqref{eq(4.12)},
it suffices to verify that $M_{1,1}=0$ and $M_{2,1}=0$.
As a matter of fact, by virtue of \eqref{eq(4.16)}, \eqref{eq(4.17)}, we compute
\begin{align*}
M_{1,1}=&\frac{R^3+2\rho^3}{R(R^3-\rho^3)}(i_1(R)k_1(\rho)-i_1(\rho)k_1(R))
\\
&~~~~~~~~~~~
\bigg[k_1(\rho)\left(i_0(R)-\frac3Ri_1(R)-i_2(R)\right)
+i_1(\rho)\left(k_0(R)+\frac3Rk_1(R)-k_2(R)\right)\bigg],
\\
M_{2,1}=&(i_1(R)k_1(\rho)-i_1(\rho)k_1(R))
\\
&~~~~~~~~~~~
\bigg[k_1(\rho)\left(i_0(R)-\frac3Ri_1(R)-i_2(R)\right)
+i_1(\rho)\left(k_0(R)+\frac3Rk_1(R)-k_2(R)\right)\bigg].
\end{align*}
From \eqref{B-2},
$$
i_2(s)=i_0(s)-\frac3si_1(s),\quad
k_2(s)=k_0(s)+\frac3sk_1(s),
$$
from which, $M_{1,1}=M_{2,1}=0$ follows.

Next, we show $B_0<0$ by showing that $M_{1,0}<0$ and $M_{2,0}<0$.
In view of \eqref{eq(4.16)},
\begin{equation*}
\label{}
\begin{aligned}
M_{1,0}=&\frac{i_1(R)k_1(\rho)-i_1(\rho)k_1(R)}{R(R^3-\rho^3)}
\bigg[3R\rho^2(i_0(R)k_1(\rho)+i_1(\rho)k_0(R))
\\
&\quad
-6\rho^2(i_1(R)k_1(\rho)-i_1(\rho)k_1(R))
-(R^3+2\rho^3)(i_0(\rho)k_1(R)+i_1(R)k_0(\rho))\bigg].
\end{aligned}
\end{equation*}
Denote
\begin{equation*}
\begin{aligned}
\xi(s)=&3Rs^2(i_0(R)k_1(s)+i_1(s)k_0(R))
-6s^2(i_1(R)k_1(s)-i_1(s)k_1(R))
\\
&-(R^3+2s^3)(i_0(s)k_1(R)+i_1(R)k_0(s))\quad{\rm for}~0<s\leq R.
\end{aligned}
\end{equation*}
Then, by \eqref{B-3}, \eqref{B-4},
\begin{equation*}
\xi'(s)=(R^3+2s^3)(i_1(R)k_1(s)-i_1(s)k_1(R))-
3Rs^2(i_0(R)k_0(s)-i_0(s)k_0(R))
\end{equation*}
for $0<s<R$. Furthermore, observing that
$$
R^3+2s^3=(R-s)^2(R+2s)+3Rs^2,
$$
and applying \eqref{i,k-2} and \eqref{i,k-1}, we get
\begin{equation*}
\begin{aligned}
\xi'(s)>&3Rs^2[(i_1(R)k_1(s)-i_1(s)k_1(R))-(i_0(R)k_0(s)-i_0(s)k_0(R))]
\\
=&\frac{3\pi}{2R}[(R-s)\cosh(R-s)-\sinh(R-s)]
\end{aligned}
\end{equation*}
for $0<s<R$. Thus, by \eqref{sq-1}, we deduce that $\xi'(s)>0$ for $0<s<R$, which together with $\xi(R)=0$ implies
that $\xi(s)<0$, $0<s<R$; in particular, $\xi(\rho)<0$. Hence, $M_{1,0}<0$.

It remains to show that $M_{2,0}<0$. By \eqref{eq(4.17)},
\begin{equation*}
\begin{aligned}
M_{2,0}=&(i_0(R)k_0(\rho)-i_0(\rho)k_0(R))
\\
&~~~~~~~~~~~~~\left[(i_0(R)k_1(\rho)+i_1(\rho)k_0(R))
-\frac{3R^2}{R^3-\rho^3}(i_1(R)k_1(\rho)-i_1(\rho)k_1(R))\right]
\\
&-(i_1(R)k_1(\rho)-i_1(\rho)k_1(R))
\\
&~~~~~~~~~~~~~\left[(i_0(\rho)k_1(R)+i_1(R)k_0(\rho))
-\frac{3\rho^2}{R^3-\rho^3}(i_1(R)k_1(\rho)-i_1(\rho)k_1(R))\right].
\end{aligned}
\end{equation*}
Using \eqref{i,k-2}-\eqref{i,k-3}, we further obtain
\begin{equation*}
\begin{aligned}
M_{2,0}=&\frac{\pi^2}{8R^4\rho^3(R^3-\rho^3)}
\big[-2R^5+3R^4\rho-2R^3-R^2\rho^3+3R^2\rho+2\rho^3-3\rho
\\
&-(R^4-2R^3\rho-6R^2\rho^2+8R\rho^3+6R\rho-\rho^4-6\rho^2)\sinh(2(R-\rho))
\\
&-(R^4\rho-2R^3-3R^2\rho^3-3R^2\rho+2R\rho^4+12R\rho^2-4\rho^3-3\rho)\cosh(2(R-\rho))\big].
\end{aligned}
\end{equation*}
Set
\begin{equation*}
\begin{aligned}
\eta(s)=&[s^4-8Rs^3+6(R^2+1)s^2+2(R^3-3R)s-R^4]\sinh(2(R-s))
\\
&+[-2Rs^4+(3R^2+4)s^3-12Rs^2+(-R^4+3R^2+3)s+2R^3]\cosh(2(R-s))
\\
&-(R^2-2)s^3+(3R^4+3R^2-3)s-2R^5-2R^3\quad{\rm for}~0<s\le R.
\end{aligned}
\end{equation*}
Then
\begin{equation}
\label{eq(4.20)}
M_{2,0}=\frac{\pi^2}{8R^4\rho^3(R^3-\rho^3)}\eta(\rho).
\end{equation}
Notice that
\begin{equation}
\label{eq(4.26)}
\eta''(s)=2s\zeta(s)
\end{equation}
with
\begin{equation*}
\begin{aligned}
\zeta(s)=&[2s^3+(-6R^2-6)s+4R^3+12R]\sinh(2(R-s))
\\
&+[-4Rs^3+6R^2s^2+12Rs-2R^4-9R^2-6]\cosh(2(R-s))
-3(R^2-2),
\end{aligned}
\end{equation*}
and
\begin{equation}
\label{eq(4.27)}
\zeta'(s)=e^{-2(R-s)}w(s),
\end{equation}
where
\begin{align*}
w(s)=&[(4R-2)s^3+(-6R^2-6R+3)s^2+(12R^2-12R+6)s
+2R^4-4R^3+6R^2
\\
&-6R+3]e^{4(R-s)}-(4R+2)s^3+(6R^2-6R-3)s^2+(12R^2+12R+6)s
\\
&-2R^4-4R^3-6R^2-6R-3.
\end{align*}
It is easy to see that
\begin{equation}
\label{eq(4.28)}
\eta(R)=\eta'(R)=\zeta(R)=0.
\end{equation}
Thus, we only need to verify that $w(s)>0$ for $0<s<R$, which together with \eqref{eq(4.26)}-\eqref{eq(4.28)} implies
$$
\eta(s)<0\quad{\rm for}~0<s<R.
$$
In particular, $\eta(\rho)<0$ and by \eqref{eq(4.20)}, there holds $M_{2,0}<0$.

We consider, instead of $w(s)$,
\begin{align*}
u(R)=&[2R^4-4R^3+(-6s^2+12s+6)R^2+(4s^3-6s^2-12s-6)R-2s^3+3s^2
\\
&+6s+3]e^{4(R-s)}
-2R^4-4R^3+(6s^2+12s-6)R^2
\\
&-(4s^3+6s^2-12s+6)R-2s^3-3s^2+6s-3
\end{align*}
for $R\ge s>0$. A direct calculation gives that
$$
u(s)=u'(s)=u''(s)=0,\quad u^{(3)}(s)=240s^2>0,\quad u^{(4)}(s)=16(120s^2+48s)>0,
$$
and
$$
u^{(5)}(R)=64e^{4(R-s)}v(R)
$$
with
\begin{align*}
v(R)=&32R^4+96R^3+(-96s^2+192s+96)R^2+(64s^3-336s^2+288s+24)R
\\
&+48s^3-192s^2+96s+3.
\end{align*}
Furthermore, there hold
$$
v(s)=192s^2+120s+3>0,\quad v'(s)=8(42s^2+60s+3)>0
$$
and
$$
v''(R)=192(2R^2+3R-s^2+2s+1)>0\quad{\rm for}~R>s>0.
$$
On the basis of the above analysis, we conclude that $u(R)>0$ for $R>s>0$, i.e., $w(s)>0$ for $0<s<R$.
The proof is complete.
\end{proof}

\begin{lemma}
\label{lem-5.1}
(i) $\lim_{n\to\infty}B_n=\sigma_s(R)-\tilde\sigma$;

(ii) There exists $n^*\in\mathbb{N}$ such that $\mu_n=A_n/B_n$ is positive and monotonically increasing for $n\ge n^*$; moreover, $\lim_{n\to\infty}\mu_n=+\infty$.
\end{lemma}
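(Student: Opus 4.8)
The plan is to prove the two assertions separately, relying on the explicit formulas \eqref{eq(4.22)} for $A_n$ and \eqref{eq(4.6)} for $B_n$, together with the monotonicity and limit information on $Q_n$ already collected in Lemma~\ref{lem-1}, Remark~\ref{rem-4.1} and Lemma~\ref{lem-2}. For part~(i) I would pass to the limit term-by-term in \eqref{eq(4.6)}. Writing $Q_n'(R)-\tfrac nRQ_n(R)=Q_n(R)\big(\tfrac{Q_n'(R)}{Q_n(R)}-\tfrac nR\big)$, the first factor tends to $0$ by Remark~\ref{rem-4.1}, while by Lemma~\ref{lem-2} the second factor is positive, monotonically decreasing (hence bounded) and tends to $0$ by \eqref{Qn-3}, so the product tends to $0$. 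For the remaining term, $0<\rho<R$ gives $(\rho/R)^{n+2}\to0$, and $\{Q_n'(\rho)\}$ is bounded because it is positive and decreasing by Lemma~\ref{lem-1}; hence $\tfrac{\tilde\sigma}{\underline\sigma}\tfrac{\rho^{n+2}}{R^{n+2}}Q_n'(\rho)\to0$. Combining these, $B_n\to\sigma_s(R)-\tilde\sigma$.

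For part~(ii) set $L:=\sigma_s(R)-\tilde\sigma$, which is strictly positive by Theorem~\ref{thm-1} (that is exactly the inequality $\tilde\sigma<\sigma_s(R)$ proved there). Since $B_n\to L>0$, there is $n^*\in\mathbb{N}$ with $B_n\ge L/2$ for $n\ge n^*$; as $A_n>0$ for $n\ge2$, the quotient $\mu_n=A_n/B_n$ is positive for $n\ge\max\{2,n^*\}$. Moreover $A_n=\tfrac1{2R^3}(n^3+n^2-2n)\to+\infty$ while $B_n\to L$ is finite and positive, so $\mu_n=A_n/B_n\to+\infty$.

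The monotonicity is where the real work lies. I would show $\mu_{n+1}>\mu_n$ for all large $n$ by analysing
\[
\mu_{n+1}-\mu_n=\frac{A_{n+1}B_n-A_nB_{n+1}}{B_nB_{n+1}}=\frac{B_n(A_{n+1}-A_n)-A_n(B_{n+1}-B_n)}{B_nB_{n+1}},
\]
whose denominator is positive for $n\ge n^*$. A direct computation gives $A_{n+1}-A_n=\tfrac1{2R^3}(3n^2+5n)$, whence $B_n(A_{n+1}-A_n)\ge\tfrac{3L}{4R^3}n^2$ for $n$ large. The crucial point is that $B_{n+1}-B_n=o(1/n)$: by \eqref{Qn-1} one has $Q_n(R)<R/n$, so $Q_n'(R)-\tfrac nRQ_n(R)=Q_n(R)\big(\tfrac{Q_n'(R)}{Q_n(R)}-\tfrac nR\big)<\tfrac Rn\big(\tfrac{Q_n'(R)}{Q_n(R)}-\tfrac nR\big)=o(1/n)$ by \eqref{Qn-3}, while $\tfrac{\tilde\sigma}{\underline\sigma}\tfrac{\rho^{n+2}}{R^{n+2}}Q_n'(\rho)\le C(\rho/R)^{n+2}$ decays geometrically; hence both $B_n-L$ and $B_{n+1}-L$, and so their difference, are $o(1/n)$. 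Consequently $A_n(B_{n+1}-B_n)=O(n^3)\cdot o(1/n)=o(n^2)$, which is dominated by $B_n(A_{n+1}-A_n)\ge\tfrac{3L}{4R^3}n^2$; thus the numerator above is positive for all large $n$, i.e.\ $\mu_{n+1}>\mu_n$, after enlarging $n^*$ if necessary.

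The step I expect to be the main obstacle is precisely the estimate $B_{n+1}-B_n=o(1/n)$: without it the cubic growth of $A_n$ would swamp the quadratic gain $B_n(A_{n+1}-A_n)\sim\tfrac{3L}{2R^3}n^2$ in the numerator. It is resolved by combining the a priori bound $Q_n(R)<R/n$ from \eqref{Qn-1} with the limit \eqref{Qn-3} and the geometric smallness of the factor $(\rho/R)^{n+2}$, all of which are already available; no further use of the Bessel asymptotics \eqref{B-7}, \eqref{B-8} seems necessary.
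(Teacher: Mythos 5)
Your argument is correct, and for part~(i) it mirrors the paper. For part~(ii) you take a genuinely different route. The paper first establishes the sharp asymptotic
\[
\lim_{n\to\infty}\frac{\frac{\mu_n}{n^3}-\frac{1}{2R^3(\sigma_s(R)-\tilde\sigma)}}{\frac1n}
=\frac{1}{2R^3(\sigma_s(R)-\tilde\sigma)} ,
\]
which (after squeezing) gives two-sided polynomial bounds
$\tfrac{2n^3+n^2}{4R^3L}\le\mu_n\le\tfrac{2n^3+3n^2}{4R^3L}$ for $n\ge n^*$, and then deduces $\mu_{n+1}-\mu_n>0$ by subtracting these inequalities. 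You instead expand $\mu_{n+1}-\mu_n$ directly as $\bigl(B_n(A_{n+1}-A_n)-A_n(B_{n+1}-B_n)\bigr)/(B_nB_{n+1})$ and show the negative contribution is dominated, which requires only that $B_n-L=o(1/n)$. Both arguments in fact rest on the same hidden fact: the paper's limit relation above is \emph{equivalent} to $n(L-B_n)\to0$, so the key estimate is identical. The differences are cosmetic but real: you get away with the one-sided bound $Q_n(R)<R/n$ from \eqref{Qn-1}, whereas the paper invokes the exact limit $\lim_n nQ_n(R)=R$ (their \eqref{eq(4.25)}) to evaluate the middle term in \eqref{eq(4.24)}. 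Conversely, the paper's squeeze gives, as a byproduct, the explicit two-sided asymptotic control on $\mu_n$, which is slightly more information than your direct monotonicity computation provides. Both are sound; yours is marginally more economical, the paper's marginally more quantitative.
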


\begin{proof}
(i) Applying Remark \ref{rem-4.1} and Lemma \ref{lem-2}, we get
\begin{equation}
\label{eq(4.23)}
\lim_{n\to\infty}\left(Q'_n(R)-\frac{n}R Q_n(R)\right)=
\lim_{n\to\infty}Q_n(R)
\left(\frac{Q'_n(R)}{Q_n(R)}-\frac{n}R\right)=0.
\end{equation}
Besides, from Lemma \ref{lem-1} and $0<\rho<R$, we see that
$$
\lim_{n\to\infty}Q_n'(\rho)
\frac{\rho^{n+2}}{R^{n+2}}=0.
$$
The assertion (i) then follows from the definition of $B_n$.

(ii) Since $\sigma_s(R)-\tilde\sigma>0$ by \eqref{eq(2.7)}, the assertion (i) implies that there exists a positive integer $N_1\ge2$ such that $B_n>0$ for every $n\ge N_1$. Consequently,
\begin{equation}
\label{eq(4.21)}
\mu_n=\frac{A_n}{B_n}
\end{equation}
is well defined and positive for $n\ge N_1$, and
$$
\lim_{n\to\infty}\mu_n=+\infty,\quad
\lim_{n\to\infty}\frac{\mu_n}{n^3}=\frac{1}{2R^3(\sigma_s(R)-\tilde\sigma)}.
$$
We claim that
\begin{equation}
\label{eq(4.5)}
\lim_{n\to\infty}\frac{\frac{\mu_n}{n^3}-\frac{1}{2R^3(\sigma_s(R)-\tilde\sigma)}}{\frac1n}=
\frac{1}{2R^3(\sigma_s(R)-\tilde\sigma)}.
\end{equation}
In fact, using \eqref{eq(4.22)}, \eqref{eq(4.6)} and \eqref{eq(4.21)}, we compute for $n\ge N_1$,
\begin{equation}
\label{eq(4.24)}
\begin{aligned}
\frac{\frac{\mu_n}{n^3}-\frac{1}{2R^3(\sigma_s(R)-\tilde\sigma)}}{\frac1n}
=&\frac1{2R^3}\frac{\frac1{B_n}\left(1+\frac1n-\frac2{n^2}\right)-\frac1{\sigma_s(R)-\tilde\sigma}}{\frac1n}
\\
=&\frac1{2R^3}\left[\frac1{B_n}+\frac{\frac1{B_n}-\frac1{\sigma_s(R)-\tilde\sigma}}{\frac1n}-
\frac2n\frac1{B_n}\right]
\\
=&\frac1{2R^3}\left[\frac1{B_n}+\lambda\frac{nQ_n(R)\left(\frac{Q_n'(R)}{Q_n(R)}-\frac nR\right)- \frac{\tilde\sigma}{\underline\sigma}n\frac{\rho^{n+2}}{R^{n+2}}Q_n'(\rho)}{B_n(\sigma_s(R)-\tilde\sigma)}
-\frac2n\frac1{B_n}\right].
\end{aligned}
\end{equation}
Notice that
$$
nQ_n(R)=R\left[Q_n'(R)-\left(Q_n'(R)-\frac nRQ_n(R)\right)\right].
$$
Thus, from \eqref{Qn-2} and \eqref{eq(4.23)}, we derive
\begin{equation}
\label{eq(4.25)}
\lim_{n\to\infty}nQ_n(R)=R.
\end{equation}
Hence, by Lemmas \ref{lem-1}, \ref{lem-2}, the assertion (i) and \eqref{eq(4.25)}, sending $n\to\infty$ in \eqref{eq(4.24)} yields \eqref{eq(4.5)}.
Therefore, there exists a positive integer $n^*>N_1$ such that for each $n\ge n^*$,
$$
\frac1{4R^3(\sigma_s(R)-\tilde\sigma)}\le
\frac{\frac{\mu_n}{n^3}-\frac{1}{2R^3(\sigma_s(R)-\tilde\sigma)}}
{\frac1n}\le\frac3{4R^3(\sigma_s(R)-\tilde\sigma)},
$$
i.e.,
$$
\frac{2n^3+n^2}{4R^3(\sigma_s(R)-\tilde\sigma)}\le\mu_n\le
\frac{2n^3+3n^2}{4R^3(\sigma_s(R)-\tilde\sigma)}.
$$
Subsequently, there holds
$$
\mu_{n+1}-\mu_n\ge\frac{4n^2+8n+3}{4R^3(\sigma_s(R)-\tilde\sigma)}>0
$$
for $n\ge n^*$, which completes the proof of the lemma.
\end{proof}

Denote
\begin{equation*}
\label{n**}
n^{**}=\min\left\{n: n\ge n^*, \mu_n>\max\left\{\frac{A_n}{B_n}:B_n\neq0,n=0,1,\cdots, n^*-1\right\}\right\}.
\end{equation*}
Recall that we have computed
$$
[F_{\tilde{R}}(0,\mu)]Y_{n,0}=(A_n-\mu B_n)Y_{n,0},
\quad n=0,1,2,\cdots.
$$
Thus, from the definition of $n^{**}$, we derive that for even $n\ge n^{**}$,
$$
{\rm Ker}[F_{\tilde{R}}(0,\mu_n)]={\rm span}\{Y_{n,0}\},
$$
i.e.,
$$
{\rm dim}({\rm Ker}[F_{\tilde{R}}(0,\mu_n)])=1.
$$
Next, since
$$
[F_{\tilde{R}}(0,\mu_n)]Y_{k,0}=(A_k-\mu_n B_k)Y_{k,0},\quad k=0,2,4,\cdots,
$$
and $A_k-\mu_n B_k\neq0$ for $k=0,2,\cdots,n-2,n+2,\cdots$,
$$
Y_1={\rm Im}[F_{\tilde{R}}(0,\mu_n)]={\rm span}\{Y_{0,0},Y_{2,0},\cdots,Y_{n-2,0},Y_{n+2,0},
\cdots\},
$$
namely,
$$
{\rm codim}Y_1=1.
$$
Finally, it is easy to see that
$$
[F_{\mu\tilde{R}}(0,\mu_n)]Y_{n,0}=-B_nY_{n,0}\not\in Y_1.
$$

To sum up, we have the following result by the Crandall-Rabinowitz theorem.

\begin{theorem}
\label{thm-2}
There exists a positive integer $n^{**}$ such that for every even integer $n\ge n^{**}$, $\mu_n$
defined by \eqref{eq(4.21)} is a bifurcation point of the symmetry-breaking solutions to the system \eqref{eq(1.1)}-\eqref{eq(1.6)} with free boundary
$$
\partial\eps{\Omega}: r=R+\ep Y_{n,0}(\theta)+O(\ep^2),
\quad
\partial\eps{D}: r=\rho+\ep\frac{\lambda}{\underline\sigma} Q'_n(\rho)Y_{n,0}(\theta)+O(\ep^2),
$$
where $\lambda$, $Q_n$ are respectively given by \eqref{lambda}, \eqref{Qn}.
\end{theorem}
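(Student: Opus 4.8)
The plan is to apply the Crandall--Rabinowitz theorem (Theorem \ref{thm-3}) to the map $F(\tilde R,\mu)$ defined in \eqref{F}, with $X=X_2^{4+\alpha}$ and $Y=X_2^{1+\alpha}$, $1/2<\alpha<1$. Hypothesis (i) holds because $F(0,\mu)=\partial_{\bf n}p_s|_{\partial B_R}$ vanishes by \eqref{eq(1.6)} (equivalently \eqref{eq(2.5)}), so the radial branch $(\tilde R,\mu)=(0,\mu)$ lies in the zero set of $F$ for every $\mu$. The smoothness requirement ($F$ of class $C^p$, $p\ge3$) follows from the Schauder estimates of Lemma \ref{lem-sch} together with the differentiability discussion preceding \eqref{eq(4.2)}. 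Hence everything reduces to analyzing the linearization $F_{\tilde R}(0,\mu)$ and verifying (ii)--(iv) at appropriately chosen parameter values $\mu=\mu_n$.

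For the linearization I would use the formula already assembled: combining \eqref{F-sch}, \eqref{eq(4.2)}, \eqref{eq(4.3)}, \eqref{eq(4.4)}, the operator $F_{\tilde R}(0,\mu)$ is diagonal in the spherical-harmonic basis,
$$
[F_{\tilde R}(0,\mu)]Y_{n,m}=(A_n-\mu B_n)Y_{n,m},
$$
with $A_n$, $B_n$ as in \eqref{eq(4.22)}, \eqref{eq(4.6)}. The needed sign data are: $A_0=A_1=0$ and $A_n>0$ for $n\ge2$; $B_1=0$ and $B_0<0$ (Lemma \ref{lem-5.2}); and, by Lemma \ref{lem-5.1} (which uses \eqref{eq(2.7)}), $B_n\to\sigma_s(R)-\tilde\sigma>0$ while $\mu_n=A_n/B_n$ is well defined, positive, strictly increasing for $n\ge n^*$, with $\mu_n\to+\infty$. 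Setting
$$
n^{**}=\min\Bigl\{n:\ n\ge n^*,\ \mu_n>\max\{A_k/B_k:\ B_k\neq0,\ 0\le k\le n^*-1\}\Bigr\},
$$
I claim $\mu_n$ is a bifurcation point for every even $n\ge n^{**}$.

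Indeed, for such $n$ one checks (ii): $A_k-\mu_n B_k\neq0$ for all even $k\neq n$ --- for $0\le k\le n^*-1$ by the definition of $n^{**}$, and for $k\ge n^*$, $k\neq n$, by strict monotonicity of $\mu_k$; hence $\operatorname{Ker}[F_{\tilde R}(0,\mu_n)]=\operatorname{span}\{Y_{n,0}\}$ is one-dimensional on $X$. The same computation gives (iii), $\operatorname{Im}[F_{\tilde R}(0,\mu_n)]=\overline{\operatorname{span}}\{Y_{k,0}:\ k\ \mathrm{even},\ k\neq n\}$, of codimension one. For (iv), a further differentiation in $\mu$ yields $[F_{\mu\tilde R}(0,\mu_n)]Y_{n,0}=-B_nY_{n,0}$, and since $n\ge n^{**}\ge n^*$ forces $B_n>0$, this is not in $Y_1$. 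Theorem \ref{thm-3} then produces a $C^{p-2}$ curve of nontrivial solutions through $(0,\mu_n)$ with tangent $Y_{n,0}$, i.e.\ $\partial\eps\Omega:\ r=R+\ep Y_{n,0}(\theta)+O(\ep^2)$; substituting $S=Y_{n,0}$ into \eqref{T} gives the inner boundary $\partial\eps D:\ r=\rho+\ep(\lambda/\underline\sigma)Q_n'(\rho)Y_{n,0}(\theta)+O(\ep^2)$.

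The \emph{main obstacle} is not this bookkeeping but the two facts it rests on. The first is $B_0<0$ (Lemma \ref{lem-5.2}): its proof requires expanding $H_n$ in ascending powers of $\beta$ and then establishing, by a lengthy positivity analysis using the explicit half-integer Bessel formulas \eqref{B-5}, \eqref{B-6}, \eqref{i,k-2}--\eqref{i,k-3}, that each $\rho,R$-coefficient is negative. The second is the sharp asymptotics of $\mu_n$ (Lemma \ref{lem-5.1}), which furnishes monotonicity for large $n$ and thereby limits the exceptional indices to a finite set handled by the definition of $n^{**}$. Once these are in place, the theorem is a direct application of the Crandall--Rabinowitz theorem.
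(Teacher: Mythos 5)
Your argument is correct and follows the paper's proof essentially verbatim: you apply the Crandall--Rabinowitz theorem to the same map $F$ in the same spaces $X_2^{4+\alpha}\to X_2^{1+\alpha}$, use the same diagonal formula $[F_{\tilde R}(0,\mu)]Y_{n,m}=(A_n-\mu B_n)Y_{n,m}$, the same definition of $n^{**}$, and the same reliance on Lemmas \ref{lem-5.1} and \ref{lem-5.2} to verify conditions (ii)--(iv). The only minor omission is that, when checking $A_k-\mu_n B_k\neq0$ for even $k\le n^*-1$, the case $B_k=0$ must also be handled (there it reduces to $A_k>0$ for even $k\ge2$), but this is a bookkeeping detail already implicit in your setup.
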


\section*{Acknowledgments}
This work was done in part while the first author was visiting Department of Applied and Computational
Mathematics and Statistics, University of Notre Dame, under the support of China Scholarship Council
(201708360059). The kind hospitality and the financial support are gratefully acknowledged. This work is also supported by the National Natural Science Foundation of China (No. 11601200, No. 11861038
and No. 11771156), and the Science and Technology Planning Project from  Educational Commission of Jiangxi Province, China (No. GJJ160299).

\end{document}